\let\et=\etexdraw
\def\etexdraw{\drawbb\et}
\theoremstyle{plain}
\newtheorem{thm}{Theorem}[section]
\newtheorem{thm*}{Theorem}
\newtheorem{lem}[thm]{Lemma}
\newtheorem{prop}[thm]{Proposition}
\newtheorem{prop*}[thm*]{Proposition}
\newtheorem{cor}[thm]{Corollary}
\theoremstyle{definition}
\newtheorem{defi}[thm]{Definition}
\newtheorem{notation}[thm]{Notation}
\theoremstyle{remark}
\DeclareMathOperator{\Coker}{Coker}
\DeclareMathOperator{\Image}{Im}
\DeclareMathOperator{\height}{ht}
\DeclareMathOperator{\Hom}{Hom}
\DeclareMathOperator{\Ext}{Ext}
\DeclareMathOperator{\Ann}{ann}
\DeclareMathOperator{\Nil}{Nil}
\DeclareMathOperator{\HH}{H}
\DeclareMathOperator{\Supp}{Supp}
\DeclareMathOperator{\Spec}{Spec}
\DeclareMathOperator{\fm}{\mathfrak{m}}
\DeclareMathOperator{\fc}{\mathfrak{c}}
\begin{document}

\title{Global parameter test ideals}

\author{Mordechai Katzman}
\email{M.Katzman@sheffield.ac.uk}
\address{Department of Pure Mathematics,
University of Sheffield, Hicks Building, Sheffield S3 7RH, United Kingdom}

\author{Serena Murru}
\email{pmp11sm@sheffield.ac.uk}
\address{Department of Pure Mathematics,
University of Sheffield, Hicks Building, Sheffield S3 7RH, United Kingdom}

\author{Juan D.~Velez}
\email{jdvelez@unal.edu.co}
\address{Escuela de Matemáticas, Universidad Nacional de Colombia, Medellin,
Calle 59A No.~63 - 20,
Medellin, Antioquia, Colombia}

\author{Wenliang Zhang}
\email{wlzhang@uic.edu}
\address{Department of Mathematics, Statistics, and Computer Science, University of Illinois at Chicago,
851 S.~Morgan Street, Chicago, IL 60607-7045}

\subjclass[2010]{13D45, 13A35}

\thanks{M.~K.~gratefully acknowledges support from EPSRC grant EP/J005436/1. W.~Z.~is partially supported by NSF grant DMS \#1606414.}


\begin{abstract}
This paper shows the existence of
ideals whose localizations and completions at prime ideals are parameter test ideals of the
localized and completed rings. We do this for Cohen-Macaulay localizations (resp., completions) of non-local rings,
for generalized Cohen-Macaulay rings, and for non-local rings with isolated non Cohen-Macaulay points, each being an isolated non
$F$-rational point. The tools used to prove these results are constructive in nature and as a consequence our results
yield algorithms for the computation of these global  parameter test ideals.

Finally, we illustrate the power of our methods by analyzing the HSL numbers of local cohomology modules
with support at any prime ideal.
\end{abstract}

\maketitle


\section{Introduction}\label{Section: Introduction}

This paper studies certain properties of commutative rings of prime characteristic $p$.
Such rings $A$ are equipped with \emph{Frobenius maps} $f_e: A \rightarrow A$ defined as $f_e(a)=a^{p^e}$
and these give a good handle on various problems which are not available
in characteristic zero. One such handle is provided by the machinery of \emph{tight closure}
introduced by Mel Hochster and Craig Huneke in the 1990's (cf.~\cite{HochsterHunekeTC1}) which we now review.

The tight closure of an ideal $I\subseteq A$ is the set of all $a\in A$
for which for some element $c\in A$ not in any minimal prime
one has $c a^{p^e} \in I^{[p^e]}$ for all $e\gg 0$,
where $I^{[p^e]}$ denotes the ideal of $A$ generated by all $p^e$th powers of elements in $I$.


One of the basic properties of tight closure is that for ideals $I$ of regular rings one has $I^*=I$ (we refer to ideals with this property as being \emph{tightly closed}),
or equivalently, that $c=1$ and $e\geq 0$ can be used in the definition above to
test membership in the tight closure of ideals. This suggests that measuring the failure of the tight closure operation to be trivial might be a useful way of measuring how bad the
singularity of a ring is, and one thus obtains a hierarchy of singularities:
\begin{enumerate}
\item[(a)] regular rings,
\item[(b)] \emph{$F$-regular} rings: all ideals in all localizations are tightly closed,
\item[(c)] \emph{weakly $F$-regular} rings: all ideals  are tightly closed,
\item[(d)] \emph{$F$-rational} rings: all ideals generated by parameters are tightly closed.
\end{enumerate}
From this point of view, the object of interest is the set of elements $c$ in the definition of tight closure
which can be used to test membership in the tight closure of ideals.

\begin{defi}{({\it cf.} \cite[\S6]{HochsterHunekeTC1} and \cite[\S8]{HochsterHunekeTightClosureParameter})}
An element $c$ not in any minimal prime is a \emph{test element} if for all ideals $I$ and all $e\geq 0$, $c{I^*}^{[p^e]}\subseteq I^{[p^e]}$.
The \emph{test ideal} is defined as the ideal generated by all test elements.

An element $c$ not in any minimal prime is a \emph{parameter test element} if for all ideals $I$ generated by parameters and all $e\geq 0$, $c{I^*}^{[p^e]}\subseteq I^{[p^e]}$.
The \emph{parameter test ideal} is the ideal generated by all parameter test elements.
\end{defi}
Thus properties (c) and (d) above can be restated as the test ideal and the parameter test ideal being the unit ideal, respectively.

In this paper we construct \emph{global parameter test ideals} of finitely generated algebras, i.e.,
ideals whose localization are parameter test ideals of the localized rings. Among other things, our explicit description of
these ideals yields an explicit description of the $F$-rational locus of finitely generated algebras, recovering the fact that the $F$-rational locus is open
(cf.~\cite{VelezOpennessOfTheFRationalLocus}),
and  in the process also providing a method for computing global parameter test ideals.

\section{Prime characteristic tools}\label{Section: Prime characteristic tools}

In this section we introduce various tools and notation used to study rings of prime characteristic and their modules.  

We start with the elementary observation that if $A$ is a ring of prime characteristic $p$, the \emph{$e$th iterated Frobenius map} $f^e: A \rightarrow A$
taking $a\in A$ to $a^{p^e}$ ($e\geq 0$)
is a homomorphism of rings.
The usefulness of these homomorphisms lies in the fact that given an $A$-module $M$, we may endow it with a new $A$-module structure via $f^e$:
let  $F^e_* M$ denote the additive Abelian group $M$,
denoting its elements $\{ F^e_* a \,|\, a\in M \}$, and endow $F^e_* M$  with the $A$-module structure  given by  $a F_*^e m = F^e_* a^{p^e} m$.

Given any $A$-linear map $g: M \rightarrow F_*^e M$, we have an additive map  $\widetilde{g} : M \rightarrow M$ obtained by identifying $F_*^e M$ with $M$.
This map $\widetilde{g}$ is not $A$-linear: it satisfies $\widetilde{g}(a m)=a^{p^e} \widetilde{g}(m)$ for all $a\in A$ and $m\in M$.
We call additive maps with this property \emph{$e$th Frobenius maps}.
Conversely, an $e$th Frobenius map $h: M \rightarrow M$ defines an $A$-linear map  $M \rightarrow F_*^e M$ given by $m\mapsto F_*^e h(m)$.

A convenient way to keep track of Frobenius maps is provided by the use of certain skew-polynomial rings, defined as follows.
Let $A[\Theta; f^e]$ be the free $A$-module $\bigoplus_{i\geq 0} A \Theta^i$ and
give $A[\Theta; f^e]$ the structure of a ring by defining the (non-commutative) product $(a \Theta^i) (b \Theta^j) = a b^{p^{e i}} \Theta^{i+j}$.
(This is an instance of a skew-polynomial ring: cf.~\cite[Chapter 1]{LamAFirstCourseInNoncommutativeRings}.)
An $A$-module $M$ equipped with an $e$th Frobenius map $\widetilde{g}$ is nothing but an
$A[\Theta; f^e]$-module where the action of $\Theta$ on $M$ is given by $\Theta m = \widetilde{g}(m)$ for all $m\in M$.

A crucial set of tools in the prime characteristic toolkit are the \emph{Frobenius functors} which we now define.
For any $A$-module $M$ and $e\geq 1$, we can extend scalars and obtain the $F_*^e A$-module  $F_*^e A \otimes_A M$. If we now identify the rings $A$ and
$F_*^e A$ we obtain the $A$-module $A \otimes_A M$ where
for $a, b\in A$, and $m\in M$ $a(b\otimes m)=ab\otimes m$ and $a^{p^e} b \otimes m=b\otimes a m$ and we denote this by $F^e_A(M)$.
Clearly, homomorphisms $M\rightarrow N$ induce $A$-linear maps $F_A^e(M) \rightarrow F_A^e(M)$ and thus we obtain the aforementioned
\emph{$e$th Frobenius functors.}

An $e$th Frobenius map $g: M \rightarrow M$
gives rise to the $A$-linear map $1\otimes g : F_A^e (M) \rightarrow M$;
this is well defined since for all $a, b \in A$ and $m\in M$
$$(1\otimes g) (a^{p^e} b\otimes m) = a^{p^e} b g(m) =  b g(a m) =  1\otimes g (b \otimes am) .$$
In the special case when $M$ is an Artinian module over a complete regular ring, this gives a way to define a
``Matlis-dual which keeps track of Frobenius'' functor we describe in the rest of this section. This functor and its inverse were introduced in \cite{KatzmanParameterTestIdealOfCMRings}
and used to study parameter test ideals in local Cohen-Macaulay rings.

Throughout the rest of this section we adopt the following notation.

\begin{notation}\label{Notation: complete regular local}
Let $(R, \mathfrak{m})$  denote a $d$-dimensional complete regular local ring of prime characteristic $p$ and $S$
its quotient by an ideal $I\subset R$.
We will denote $E=E_R(R/\mathfrak{m})$ and $E_S=E_S(S/\mathfrak{m}S)=\Ann_E I$
the injective hulls of the residue fields of $R$ and $S$, respectively.
The Matlis dual functor $\Hom_R (-, E)$ will be denoted $(-)^\vee$.
\end{notation}

A crucial ingredient for the construction that follows is the fact that for both Artinian and Noetherian modules $R$-modules $M$,
there is a natural identification of $F_R^e(M)^\vee$ with $F_R^e(M^\vee)$
(this is proved for Artinian modules in \cite[Lemma 4.1]{LyubeznikFModulesApplicationsToLocalCohomology} and a similar proof
applied to a presentation of a finitely generated $M$ yields this result for Notherian modules)
and  henceforth we use this identification tacitly.
In what follows we will consider  $R[\Theta; f^e]$-modules which are Artinian as $R$-modules and we
will refer to these as being Artinian $R[\Theta; f^e]$-modules.
A map of Artinian $R[\Theta; f^e]$-modules $\rho: M\rightarrow N$,
yields a commutative diagram
\begin{equation*}
\xymatrix{
F_R^e(M) \ar@{>}[r]^{F_R^e (\rho)} \ar@{>}[d]^{1\otimes \Theta} & F_R^e(N) \ar@{>}[d]^{1\otimes \Theta} \\
M \ar@{>}[r]^{\rho} & N \\
}
\end{equation*}
and an application of the Matlis dual gives the commutative diagram
\begin{equation*}
\xymatrix{
N^\vee \ar@{>}[r]^{\rho^\vee} \ar@{>}[d]^{1\otimes \Theta^\vee} & M^\vee \ar@{>}[d]^{1\otimes \Theta^\vee}  \\
F_R^e(N^\vee) \ar@{>}[r]^{F_R^e (\rho^\vee)} & F_R^e(M^\vee) \\
}
\end{equation*}
Define $\mathcal{C}_e$ to be the category of
Artinian $R[\Theta; f^e]$-modules and $\mathcal{D}_e$ the category of $R$-linear maps $N \rightarrow F^e_R (N)$ for Noetherian $R$-modules $N$,
where morphisms in $\mathcal{D}_e$ are commutative diagrams
\begin{equation}\label{CD1}
\xymatrix{
N \ar@{>}[r]^{\varphi} \ar@{>}[d]^{\xi} & M \ar@{>}[d]^{\zeta}  \\
F_R^e(N) \ar@{>}[r]^{F_R^e(\varphi)} & F_R^e(M) \\
}.
\end{equation}
The construction above yields a contravariant functor $\Delta^e : \mathcal{C}_e \rightarrow \mathcal{D}_e$, and this functor is exact \cite[Chapter 10]{BrodmannSharpLocalCohomology}.
(Readers familiar with G.~Lyubeznik's notion of $F$-finite $F$-modules might recognize $\Delta^1$ as the first step in the direct limit system
whose direct limit yields Lyubeznik's functor $\mathcal{H}$; see section 4 in \cite{LyubeznikFModulesApplicationsToLocalCohomology}).
Furthermore, an application of the Matlis dual to (\ref{CD1}) yields
\begin{equation}\label{CD2}
\xymatrix{
F_R^e(M^\vee) \ar@{>}[r]^{F_R^e(\varphi^\vee)} \ar@{>}[d]^{\zeta^\vee} & F_R^e(N^\vee) \ar@{>}[d]^{\xi^\vee}\\
M^\vee \ar@{>}[r]^{\varphi^\vee}  & N^\vee   \\
}.
\end{equation}
which can be used to equip $M^\vee$ and $N^\vee$ with $R[\Theta; f^e]$-module structures given by
$\Theta m=\zeta^\vee (1\otimes m)$ and $\Theta n=\xi^\vee (1\otimes n)$, respectively.
With these structures, $\varphi^\vee$ is $R[\Theta; f^e]$-linear.
This construction yields an exact contravariant functor $\Psi^e : \mathcal{D}_e \rightarrow \mathcal{C}_e$. Now, after the identification of the double Matlis dual $(-)^{\vee\vee}$ with the identity
functor on Artinian and Noetherian $R$-modules, the compositions $\Psi^e \circ \Delta^e$ and $\Delta^e \circ \Psi^e$ yield the identity functors on
$\mathcal{C}_e$ and $\mathcal{D}_e$, respectively. In this sense, we can think of $\Delta^e$ as the Matlis dual that keeps track of a given Frobenius map.
An immediate corollary of this is the following.

\begin{cor}
\begin{enumerate}
\item[(a)] Let $M\in \mathcal{C}^e$. The action of $\Theta$ on $M$ is zero if and only if the map $\Delta^e(M)$ is zero.
\item[(b)] The map $\left( N\xrightarrow{\varphi} F_R^e(N) \right)\in \mathcal{D}^e$ is zero if and only if the action of
$\Theta$ on $\Psi^e\left( N\xrightarrow{\varphi} F_R^e(N) \right)$ is zero.
\end{enumerate}
\end{cor}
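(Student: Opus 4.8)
The plan is to derive both equivalences from the fact that $\Psi^e$ and $\Delta^e$ are mutually inverse exact contravariant functors, together with the observation that ``the action of $\Theta$ is zero'' and ``a map in $\mathcal{D}^e$ is the zero map'' are the conditions that $\Delta^e$ and $\Psi^e$ interchange. For part (a), let $M\in\mathcal{C}^e$ and let $\varphi\colon N\to F^e_R(N)$ be the image $\Delta^e(M)$, where $N=M^\vee$. By the explicit construction of $\Delta^e$, the map $\varphi$ is precisely $1\otimes\Theta^\vee$ appearing in the second displayed diagram of the construction, i.e.\ the Matlis dual of the structural map $1\otimes\Theta\colon F^e_R(M)\to M$ that encodes the $R[\Theta;f^e]$-module structure on $M$. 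Since Matlis duality is a faithful exact functor on Artinian and Noetherian modules (a homomorphism is zero iff its Matlis dual is zero), and since $F^e_R(-)$ together with the identification $F^e_R(-)^\vee\cong F^e_R((-)^\vee)$ is also faithful, we get that $\varphi=0$ if and only if $1\otimes\Theta\colon F^e_R(M)\to M$ is the zero map. But by the very definition of the module structure (namely $\Theta m=\widetilde{g}(m)=(1\otimes g)(1\otimes m)$ as recalled earlier in the section), $1\otimes\Theta$ is zero if and only if $\Theta$ acts as zero on $M$. This chain of ``iff''s gives part (a).

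For part (b), I would run the same argument in the other direction. Start with an object $\left(N\xrightarrow{\varphi}F^e_R(N)\right)\in\mathcal{D}^e$ and set $M=\Psi^e\left(N\xrightarrow{\varphi}F^e_R(N)\right)$. By the construction of $\Psi^e$ via diagram~(\ref{CD2}), the $R[\Theta;f^e]$-structure on $M=N^\vee$ is given by $\Theta n=\xi^\vee(1\otimes n)$ where $\xi=\varphi$, so $\Theta$ acts as multiplication through the Matlis dual of $\varphi$, precomposed with the canonical map. Again by faithfulness of Matlis duality and of the Frobenius functor (with the tacit identification), the composite defining the $\Theta$-action is zero iff $\varphi^\vee$ is zero iff $\varphi$ itself is zero. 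Alternatively — and this is perhaps the cleanest route — once part (a) is established, apply it to $M=\Psi^e\left(N\xrightarrow{\varphi}F^e_R(N)\right)$: part (a) says $\Theta$ acts as zero on $M$ iff $\Delta^e(M)$ is the zero map; but $\Delta^e(M)=\Delta^e\Psi^e\left(N\xrightarrow{\varphi}F^e_R(N)\right)=\left(N\xrightarrow{\varphi}F^e_R(N)\right)$ after the identification of $\Delta^e\circ\Psi^e$ with the identity functor on $\mathcal{D}^e$, so $\Delta^e(M)$ is zero iff $\varphi=0$. Chaining these gives part (b) immediately.

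I do not anticipate a serious obstacle: the statement really is an ``immediate corollary'' as the text asserts, and the only points requiring a moment of care are (i) spelling out that ``$\Theta$ acts as zero on $M$'' translates, under the dictionary between $e$th Frobenius maps and $A[\Theta;f^e]$-modules set up at the start of the section, into ``the associated map $F^e_R(M)\to M$ (equivalently $M\to F^e_R(M)$ after dualizing) is the zero map,'' and (ii) invoking that the Matlis dual functor and the Frobenius functor are faithful on the relevant categories, so that a morphism vanishes precisely when its image under these functors vanishes. The mild subtlety worth flagging explicitly is the compatibility with the identification $F^e_R(M)^\vee\cong F^e_R(M^\vee)$ used tacitly throughout, which ensures that applying $\Delta^e$ or $\Psi^e$ to the zero morphism yields the zero morphism and conversely; once that is acknowledged, both parts follow formally.
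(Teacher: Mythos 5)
Your argument is correct and is essentially the filled-in version of what the paper calls an ``immediate corollary'': part (a) amounts to the faithfulness of Matlis duality applied to the structural map $1\otimes\Theta\colon F^e_R(M)\to M$, and part (b) then follows by applying (a) to $\Psi^e(\varphi)$ together with the identification $\Delta^e\circ\Psi^e\cong\mathrm{id}$ established just before the statement. The paper gives no written proof, and your reasoning matches the intended one.
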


\section{Natural Frobenius maps on local cohomology modules}\label{Section: Natural Frobenius maps on local cohomology modules}

For any commutative ring $S$ of prime characteristic $p$
the local cohomology modules $\HH^\bullet_\mathfrak{m}(S)$ come equipped with a natural Frobenius map described as follows.
We have an $R$-linear map $g:S \rightarrow F_*^e S$ given by $g(s)=F_*^e s^{p^e}$ and this induces $R$-linear maps
$h: \HH^\bullet_\mathfrak{m}(S) \rightarrow \HH^\bullet_\mathfrak{m}(F_*^e S)$.
Now the Independence Theorem for local cohomology \cite[4.1]{BrodmannSharpLocalCohomology} gives
$$\HH^\bullet_\mathfrak{m}(F_*^e S)=\HH^\bullet_{\mathfrak{m} F_*^e R}(F_*^e S)= \HH^\bullet_{F_*^e \mathfrak{ \mathfrak{m}}^{[p]}}(F_*^e S)= \HH^\bullet_{F_*^e \mathfrak{m}}(F_*^e S)$$
and this, as an  $F_*^e S$-module, can be identified with $F_*^e \HH^\bullet_\mathfrak{m}(S)$.

Our next aim is to give a more explicit description of these Frobenius maps with the aid of Local Duality over $R$ \cite[11.2.5]{BrodmannSharpLocalCohomology}.

We adopt henceforth in this section the notation as in \ref{Notation: complete regular local}.

\begin{prop}\label{Proposition: Phi of local cohomology}(cf.~\cite[section 2]{LyubeznikVanishingLCCharp})
Consider $\HH^i_\mathfrak{m}(S)$ as an $R[T; f^e]$ module where $T$ acts as the
natural Frobenius map.
Then $\Delta^e( \HH^i_\mathfrak{m}(S) )$ is isomorphic to the map $\Ext_R^{d-i}(R/I, R) \rightarrow \Ext_R^{d-i}(R/I^{[p^e]}, R)$ induced by the surjection
$R/I^{[p^e]} \rightarrow R/I$.
\end{prop}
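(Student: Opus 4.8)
The plan is to compute $\Delta^e$ of $\HH^i_\mathfrak{m}(S)$ directly from its definition via local duality over the regular ring $R$, by unwinding what the natural Frobenius map $T$ does under the Matlis duality functor that keeps track of Frobenius. First I would use Local Duality \cite[11.2.5]{BrodmannSharpLocalCohomology} to identify $\HH^i_\mathfrak{m}(S)^\vee$ with $\Ext_R^{d-i}(S, R) = \Ext_R^{d-i}(R/I, R)$ as $R$-modules; since $S$ is a finitely generated $R$-module, this Ext is Noetherian, so we are indeed in the situation of the functor $\Psi^e$ acting on an object of $\mathcal{D}_e$. By the discussion preceding Corollary~2.1, $\Delta^e(\HH^i_\mathfrak{m}(S))$ is the object of $\mathcal{D}_e$ whose top-left corner is $\Ext_R^{d-i}(R/I,R)$, and the content of the proposition is the identification of the map $\xi : \Ext_R^{d-i}(R/I,R) \to F_R^e(\Ext_R^{d-i}(R/I,R))$ with the canonical map into $\Ext_R^{d-i}(R/I^{[p^e]},R)$.

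The key computation is to trace the natural Frobenius map $T$ through the chain of identifications. Recall that $T$ on $\HH^i_\mathfrak{m}(S)$ comes from $g : S \to F^e_* S$, $g(s) = F^e_* s^{p^e}$, together with the Independence Theorem identification $\HH^i_{F^e_*\mathfrak{m}}(F^e_* S) \cong F^e_* \HH^i_\mathfrak{m}(S)$. The second step is therefore to apply the Frobenius functor $F^e_R$ and Matlis duality to this map and to recognize that $F^e_R(S^\vee) \cong (F^e_R S)^\vee$ under the identification quoted from \cite[Lemma~4.1]{LyubeznikFModulesApplicationsToLocalCohomology}, so that $F^e_R$ applied to $\HH^i_\mathfrak{m}(S)$ on the cohomology side corresponds, after dualizing, to $\Ext_R^{d-i}$ applied to $F^e_R(S) = R/I \otimes_R F^e_* R$, which as an $R$-module (after reidentifying $F^e_* R$ with $R$) is $R/I^{[p^e]}$, because the defining ideal of $F^e_R(R/I)$ is generated by the $p^e$-th powers of a generating set of $I$. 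The third step is to check that the map induced on $\Ext$ by the natural Frobenius is precisely the functorial map $\Ext_R^{d-i}(R/I,R) \to \Ext_R^{d-i}(R/I^{[p^e]},R)$ coming from the surjection $R/I^{[p^e]} \twoheadrightarrow R/I$; this is where one must carefully match the map $g$ (which raises to $p^e$-th powers) with the tensor-identification in $F^e_R$, using the exactness of $\Delta^e$ to reduce to a presentation of $S$ by free $R$-modules and functoriality of everything in sight.

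Concretely, I would pick a finite free resolution $P_\bullet \to R/I$ over $R$, note that $F^e_R(P_\bullet) \to F^e_R(R/I) = R/I^{[p^e]}$ is again a free resolution (by flatness of Frobenius over the regular ring $R$), and observe that the Frobenius structure on $\HH^i_\mathfrak{m}(S)$ corresponds under local duality to the map of complexes $\Hom_R(P_\bullet, R) \to \Hom_R(F^e_R P_\bullet, R) \cong F^e_R \Hom_R(P_\bullet, R)$ obtained by applying $F^e_R$; passing to cohomology in degree $d-i$ yields exactly the asserted map. The main obstacle I anticipate is bookkeeping: keeping straight the several identifications — the Independence Theorem isomorphism $\HH^i_{F^e_*\mathfrak{m}}(F^e_* S) \cong F^e_*\HH^i_\mathfrak{m}(S)$, the natural identification $F^e_R(M)^\vee \cong F^e_R(M^\vee)$, and the Matlis dual of Local Duality — and verifying that the composite respects the $R[\Theta;f^e]$-module structure, i.e.\ that no twist or sign is lost when reidentifying $F^e_* R$ with $R$. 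Once these compatibilities are pinned down, the statement follows formally from the fact that $\Psi^e \circ \Delta^e$ is the identity on $\mathcal{C}_e$.
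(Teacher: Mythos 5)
Your proposal is correct and follows essentially the same route as the paper's proof: apply Local Duality over the regular ring $R$, use the natural identification $F_R^e(M)^\vee \cong F_R^e(M^\vee)$ together with the flatness of $F_R^e$ (Kunz) to identify $F_R^e(\Ext_R^{d-i}(S,R))$ with $\Ext_R^{d-i}(F_R^e(S),R)=\Ext_R^{d-i}(R/I^{[p^e]},R)$, and recognize the dual of the natural Frobenius as the map induced by the surjection $R/I^{[p^e]}\twoheadrightarrow R/I$. Your extra step of realizing the induced map on a free resolution $P_\bullet$ is just an explicit unwinding of the same identifications, and the bookkeeping issues you flag are exactly the ones the paper also handles implicitly.
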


\begin{proof}
Let $h:  \HH^i_\mathfrak{m}(S) \rightarrow F_*^e \HH^i_\mathfrak{m}(S)$ be the $R$-linear map
described above. We apply Local Duality over the regular ring $R$ to
identify the functors $\HH^i_\mathfrak{m}(-)$ and $\Ext_R^{d-i}(-, R)^\vee$
and hence also $h$ with the map (which we also call $h$)
$h:  \Ext_R^{d-i}(S, R)^\vee \rightarrow F_*^e \Ext_R^{d-i}(S, R)^\vee$.
This map is also induced by the $R$-linear map $g$ above.

Now the map $1\otimes \widetilde{h}: F_R^e (\HH^i_\mathfrak{m}(S)) \rightarrow \HH^i_\mathfrak{m}(S)$
is identified with
$1\otimes \widetilde{h}: F_R^e (\Ext_R^{d-i}(S, R)^\vee) \rightarrow \Ext_R^{d-i}(S, R)^\vee$
and the natural identification
$F_R^e (\Ext_R^{d-i}(S, R)^\vee)=F_R^e (\Ext_R^{d-i}(S, R))^\vee$ yields the map
$1\otimes \widetilde{h}: F_R^e (\Ext_R^{d-i}(S, R))^\vee \rightarrow \Ext_R^{d-i}(S, R)^\vee$
which is the dual of the map
$w: \Ext_R^{d-i}(S, R) \rightarrow F_R^e (\Ext_R^{d-i}(S, R)) = \Ext_R^{d-i}(F_R^e (S), R)$
induced by
$1\otimes \widetilde{g} : F_R^e(S) \rightarrow S$ and where the last equality
follows from the flatness of the functor $F^e_R$ as $R$ is regular (cf.~\cite[Corollary 2.7]{KunzCharacterizationsOfRegularLocalRings}).
\end{proof}

Proposition \ref{Proposition: Phi of local cohomology} is the key ingredient that will allow us to answer questions
about local cohomology modules and their natural Frobenius actions in terms of $R$-linear maps of Noetherian modules into their Frobenius functors.
To do so we expand our prime characteristic toolbox in the next section.

\section{Further prime characteristic tools: roots and $\star$-closures}\label{Section: Further prime characteristic tools}

In this section we introduce two crucial constructions:
the first is a generalization of the $p^e$-root operation on ideals introduced in
\cite{BlickleMustataSmithDiscretenessAndRationalityOfFThresholds} (denoted there as $(-)^{[1/p^e]}$) and in
\cite{KatzmanParameterTestIdealOfCMRings}.

\begin{defi}
Let $e\geq 0$. Let $T$ be a commutative ring.
\begin{enumerate}
  \item[(a)] Given any matrix (or vector) $A$ with entries in $T$, we define $A^{[p^e]}$ to be the matrix obtained from $A$ by raising its
entries to the $p^e$th power.

  \item[(b)] Given any submodule $K$ of a finitely generated free module $T^\alpha$, we define $K^{[p^e]}$ to be the $R$-submodule of $T^\alpha$ generated by
$\{ v^{[p^e]} \,|\, v\in K \}$.

\end{enumerate}
\end{defi}

Henceforth in this section, $T$ will denote a regular ring with the property that $F_*^e T$ are \emph{intersection flat $T$-modules} for all $e\geq 0$, i.e.,
for any family of $T$-modules $\{M_\lambda\}_{\lambda\in\Lambda}$,
$$F_*^e T \otimes_T \bigcap_{\lambda\in\Lambda} M_\lambda= \bigcap_{\lambda\in\Lambda}\left( F_*^e T \otimes_T  M_\lambda\right) .$$
These include rings $T$ for which $F_*^e T$ are free $T$-modules (e.g.~, polynomial rings and power series rings with $F$-finite coefficient fields),
and also all complete regular rings (cf.~\cite[Proposition 5.3]{KatzmanParameterTestIdealOfCMRings}).
These rings have the property that for any collection of submodules $\{L_\lambda\}_{\lambda\in \Lambda}$ of $T^\alpha$ ,
$\left( \bigcap_{\lambda \in \Lambda} L_\lambda \right)^{[p^e]} =\bigcap_{\lambda \in \Lambda} L_\lambda^{[p^e]}$:
indeed, $T$ being regular implies that for any submodule $L\subseteq T^\alpha$, $L^{[p^e]}$ can be identified with $F_T^e( L)$ 
and of $F_*^e T$ being intersection-flat implies
$$F_T^e ( \bigcap_{\lambda \in \Lambda} L_\lambda )= F_*^e T \otimes_T  \bigcap_{\lambda \in \Lambda} L_\lambda =
\bigcap_{\lambda \in \Lambda} F_*^e T \otimes_T  L_\lambda = \bigcap_{\lambda \in \Lambda} F_T^e ( L_\lambda ).
$$

The theorem below extends the $I_e(-)$ operation defined on ideals in \cite[Section 5]{KatzmanParameterTestIdealOfCMRings}
and in \cite[Definition 2.2]{BlickleMustataSmithDiscretenessAndRationalityOfFThresholds}
(where it is denoted $(-)^{[1/p^e]}$) to submodules of free $R$-modules.
Recall that for an ideal $J$ of a regular ring, $I_e(J)$ was defined as the smallest ideal whose $p^e$th Frobenius power contains $J$.
We extend this as follows.

\begin{thm}
\label{Theorem: qth root with respect to U}
Let $e\geq 1$.
\begin{enumerate}
  \item[(a)] Given a submodule $K\subseteq T^\alpha$ there exists a minimal submodule $L \subseteq T^\alpha$ for which
  $K\subseteq L^{[p^e]}$. We denote this minimal submodule $I_e (K)$.
  \item[(b)] Let  $U$ be an $\alpha\times \alpha$ matrix with entries in $T$ and let $V\subseteq T^\alpha$.
  The set of all submodules $W \subseteq T^\alpha$ which contain $V$ and which satisfy $U W \subseteq W^{[p^e]}$ has a unique
  minimal element.
\end{enumerate}
\end{thm}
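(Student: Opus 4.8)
The plan is to prove part (a) first and then use it to bootstrap part (b). For (a), the key observation is that $T$ is intersection-flat, so the operation $L\mapsto L^{[p^e]}$ commutes with arbitrary intersections of submodules of $T^\alpha$, as established in the discussion preceding the theorem. Consider the family $\mathcal{F}$ of all submodules $L\subseteq T^\alpha$ with $K\subseteq L^{[p^e]}$; this family is nonempty since $T^\alpha\in\mathcal{F}$ (because $(T^\alpha)^{[p^e]}=T^\alpha$, as $1^{p^e}=1$). Set $L_0=\bigcap_{L\in\mathcal F}L$. Then $L_0^{[p^e]}=\bigl(\bigcap_{L\in\mathcal F}L\bigr)^{[p^e]}=\bigcap_{L\in\mathcal F}L^{[p^e]}\supseteq K$, so $L_0\in\mathcal F$, and by construction $L_0$ is contained in every member of $\mathcal F$; hence $L_0$ is the minimal element, and we define $I_e(K):=L_0$. (Strictly, one should first pass to a Noetherian reduction: since $T^\alpha$ is a Noetherian module, the intersection $\bigcap_{L\in\mathcal F}L$ equals a finite subintersection, so the intersection-flatness hypothesis, which is stated for arbitrary families, is more than enough, but invoking it directly is cleanest.)

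For part (b), I would iterate. Given $U$ and $V$, I want the smallest $W\supseteq V$ with $UW\subseteq W^{[p^e]}$. Define inductively $W_0 = V + U^{-1}$-type corrections — more precisely, set $W_0=V$ and
\begin{equation*}
W_{n+1}=W_n+I_e(UW_n),
\end{equation*}
where $UW_n$ denotes the image of $W_n$ under the linear map given by $U$, a submodule of $T^\alpha$, so that $I_e(UW_n)$ is well-defined by part (a). This is an ascending chain of submodules of the Noetherian module $T^\alpha$, so it stabilizes: $W_N=W_{N+1}=:W$ for some $N$. Then $I_e(UW)\subseteq W$ means $UW\subseteq I_e(UW)^{[p^e]}\subseteq W^{[p^e]}$, so $W$ lies in the prescribed set. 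For minimality, suppose $W'$ is any submodule with $V\subseteq W'$ and $UW'\subseteq W'^{[p^e]}$. I claim $W_n\subseteq W'$ for all $n$ by induction: $W_0=V\subseteq W'$; and if $W_n\subseteq W'$ then $UW_n\subseteq UW'\subseteq W'^{[p^e]}$, so $W'$ is one of the submodules whose $p^e$th Frobenius power contains $UW_n$, whence by the minimality in part (a) we get $I_e(UW_n)\subseteq W'$, and therefore $W_{n+1}=W_n+I_e(UW_n)\subseteq W'$. Taking $n=N$ gives $W\subseteq W'$, so $W$ is the unique minimal element.

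The main obstacle, such as it is, is making sure the intersection-flatness hypothesis is correctly deployed in part (a): the family $\mathcal F$ is genuinely infinite, so one needs the full strength of $L\mapsto L^{[p^e]}$ commuting with infinite intersections (or, equivalently, a Noetherianity argument reducing to the finite case) to conclude that the intersection $L_0$ still satisfies $K\subseteq L_0^{[p^e]}$. Everything else is a routine ascending-chain/minimality bookkeeping argument, and the identification $L^{[p^e]}\cong F_T^e(L)$ for $T$ regular — already recorded in the excerpt — is what guarantees these Frobenius powers behave well enough for the intersection identity to hold. One should also check the harmless edge cases ($e\geq 1$ is assumed; for $V=0$ part (b) recovers a statement about $U$-stable submodules under Frobenius, and the construction still produces $W=0$ when $U\cdot 0=0\subseteq 0$).
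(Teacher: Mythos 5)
Your proof is correct and follows essentially the same route as the reference \cite{KatzmanZhangAnnihilatorsOfArtinianModules} that the paper cites for this theorem: part (a) via intersection-flatness of $F_*^e T$, and part (b) via the ascending chain $W_0=V$, $W_{n+1}=W_n+I_e(UW_n)$, which is precisely the construction the paper itself recalls in the proof of Lemma~\ref{Lemma: star commutes with completion}. One small correction to your parenthetical aside in (a): a Noetherian module satisfies the ascending chain condition, not the descending one, so an infinite intersection of submodules need not reduce to a finite subintersection (e.g.\ $\bigcap_{n\geq 1}(x^n)=0$ in $\mathbb{K}[\![x]\!]$ is not any finite subintersection); this aside is harmless, since your actual argument relies on intersection-flatness directly, which is exactly the right tool, but the ``Noetherian reduction'' as stated would not work.
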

\begin{proof}
See \cite[Theorem 3.2]{KatzmanZhangAnnihilatorsOfArtinianModules}.
%
%
\end{proof}

\begin{defi}
With notation as in Theorem \ref{Theorem: qth root with respect to U}, we call the unique minimal submodule in
\ref{Theorem: qth root with respect to U}(b) the \emph{star-closure of $V$ with respect to $U$}
and denote it
$V^{\star U}$.
\end{defi}

The effective calculation of the $\star$-closure boils down to the calculation of $I_e$. When $F_*^e T$ is $T$-free,  this is
a straightforward generalization of the calculation of $I_e$ for ideals. To do so,  fix a free basis $\mathcal{B}$ for $F_*^e T$ and note that
every element $v\in T^\alpha$ can be expressed uniquely in the form $v=\sum_{b\in \mathcal{B}} u_{b}^{[p^e]} b$
where $u_{b}\in T^\alpha$ for all $b\in \mathcal{B}$.

\begin{prop}
\label{Proposition: Computing Ie}
Let $e\geq 1$.
\begin{enumerate}
  \item [(a)] For any submodules $V_1, \dots, V_\ell\subseteq R^n$, $I_e(V_1 + \dots + V_\ell)=I_e(V_1)  + \dots + I_e(V_\ell)$.
  \item [(b)] Let $\mathcal{B}$ be a  free basis for $F_*^e T$.
  Let $v\in R^\alpha$ and let
  $$v=\sum_{b\in \mathcal{B}} u_{b}^{[p^e]} b $$
  be the unique expression for $v$ where $u_{b}\in T^\alpha$ for all
  $b\in \mathcal{B}$. Then  $I_e(T v)$ is the submodule $W$ of $T^\alpha$ generated by $\{ u_b  \,|\, b\in \mathcal{B} \}$.
  \end{enumerate}
\end{prop}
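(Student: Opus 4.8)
The plan is to establish (a) and (b) separately, with (a) being essentially formal and (b) requiring the concrete description of $I_e$ in terms of a free basis. For part (a), I would argue by a two-sided containment. The inclusion $I_e(V_1 + \dots + V_\ell) \subseteq I_e(V_1) + \dots + I_e(V_\ell)$ follows because $\left(I_e(V_1) + \dots + I_e(V_\ell)\right)^{[p^e]} = I_e(V_1)^{[p^e]} + \dots + I_e(V_\ell)^{[p^e]}$ (the Frobenius power operation on submodules distributes over finite sums, since it is generated by the images of individual vectors) and each $V_j \subseteq I_e(V_j)^{[p^e]}$, so the right-hand side is one of the submodules $L$ with $V_1 + \dots + V_\ell \subseteq L^{[p^e]}$; minimality of $I_e(V_1+\dots+V_\ell)$ gives the containment. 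Conversely, $V_j \subseteq V_1 + \dots + V_\ell \subseteq I_e(V_1+\dots+V_\ell)^{[p^e]}$ forces $I_e(V_j) \subseteq I_e(V_1+\dots+V_\ell)$ by minimality of $I_e(V_j)$, and summing over $j$ gives the reverse inclusion. The only point needing a line of justification is that $(L+L')^{[p^e]} = L^{[p^e]} + L'^{[p^e]}$, which is immediate from the definition of $K^{[p^e]}$ as the submodule generated by $\{v^{[p^e]} : v \in K\}$ together with the additivity relation $(v+w)^{[p^e]} \equiv$ a $T$-linear combination of $v^{[p^e]}, w^{[p^e]}$ — or more cleanly, from the identification $L^{[p^e]} = F_T^e(L)$ and right-exactness of $F_T^e$.

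For part (b), I would first reduce to verifying that the submodule $W$ generated by $\{u_b : b \in \mathcal{B}\}$ satisfies the defining property of $I_e(Tv)$: namely $Tv \subseteq W^{[p^e]}$, and $W$ is minimal among submodules with this property. The containment $Tv \subseteq W^{[p^e]}$ is clear: $v = \sum_b u_b^{[p^e]} b$ exhibits $v$ as a $T$-linear combination (with coefficients the basis elements $b \in F_*^e T$, after the identification $F_*^e T \otimes_T T^\alpha \cong T^\alpha$ coming from regularity) of the generators $u_b^{[p^e]}$ of $W^{[p^e]}$ — more precisely, $v \in W^{[p^e]}$ once one unwinds that $W^{[p^e]} = F_T^e(W)$ is spanned over $F_*^e T$ (equivalently over $T$, using the basis $\mathcal{B}$) by the $u_b$. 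For minimality, suppose $L \subseteq T^\alpha$ satisfies $v \in L^{[p^e]}$. Since $L^{[p^e]} = F_T^e(L) = F_*^e T \otimes_T L$ sits inside $F_*^e T \otimes_T T^\alpha = T^\alpha$, write $v = \sum_j w_j^{[p^e]} b$ — that is, expand $v$ against the basis $\mathcal{B}$ but now using that $v \in L^{[p^e]}$, which forces each coefficient $w_b$ of the expansion to lie in $L$. By uniqueness of the expansion $v = \sum_b u_b^{[p^e]} b$, we get $u_b = w_b \in L$ for every $b$, hence $W \subseteq L$. This shows $W$ is the unique minimal such submodule, i.e.\ $W = I_e(Tv)$.

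The one genuine subtlety — and the step I expect to be the main obstacle to write carefully — is the claim that membership $v \in L^{[p^e]}$ forces the coefficients $u_b$ of the \emph{basis expansion of $v$} to lie in $L$. This is exactly where intersection-flatness (or here, $F_*^e T$-freeness) and regularity of $T$ are used: one needs that $L^{[p^e]}$, viewed inside $T^\alpha \cong \bigoplus_{b \in \mathcal{B}} T^\alpha \cdot b$, is the "obvious" submodule $\bigoplus_b L \cdot b$, so that reading off coordinates against $\mathcal{B}$ respects the submodule $L$. Concretely: $L^{[p^e]} = F_*^e T \otimes_T L$, and tensoring the free module $F_*^e T = \bigoplus_b T b$ with $L$ gives $\bigoplus_b (T b \otimes_T L) = \bigoplus_b L \cdot b$ compatibly with the same decomposition of $F_*^e T \otimes_T T^\alpha = T^\alpha$; flatness guarantees this decomposition of the subobject is the intersection of $L^{[p^e]}$ with the $b$-component. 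Once this is nailed down, uniqueness of the expansion does the rest, and part (a) then also follows for free bases by summing the descriptions in (b) — though the statement of (a) as given holds in the greater generality of intersection-flat $T$, where it was already recorded above that $\left(\bigcap_\lambda L_\lambda\right)^{[p^e]} = \bigcap_\lambda L_\lambda^{[p^e]}$, and the sum statement is the analogous (easier) distributivity over finite sums.
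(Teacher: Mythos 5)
Your proof is correct, and the argument is the natural one. The paper itself does not give a proof of this proposition; it cites \cite[Proposition 3.4]{KatzmanZhangAnnihilatorsOfArtinianModules}, so there is nothing to compare against directly, but the two-sided containment for (a) (using additivity of $v\mapsto v^{[p^e]}$, equivalently right-exactness of $F_T^e$) and the basis-coefficient argument for (b) (that $v\in L^{[p^e]}$ forces the unique coefficients $u_b$ to lie in $L$, via the decomposition $F_T^e(L)=\bigoplus_b L\cdot b$ inside $F_T^e(T^\alpha)=\bigoplus_b T^\alpha\cdot b$) is exactly the argument one would give. You correctly identify the one step that needs care — passing from membership of $v$ in $L^{[p^e]}$ to membership of the coefficients in $L$ — and you correctly identify freeness and flatness of $F_*^e T$ (i.e.\ regularity of $T$) as what makes it work. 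One small slip: in the minimality step you write ``$v=\sum_j w_j^{[p^e]} b$'' with a mismatched index $j$ and a dangling $b$; this should read $v=\sum_{b\in\mathcal{B}} w_b^{[p^e]} b$, and the conclusion you draw (uniqueness forces $u_b=w_b\in L$) is then correct as stated.
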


\begin{proof}
See \cite[Proposition 3.4]{KatzmanZhangAnnihilatorsOfArtinianModules}.
%
%
\end{proof}

The behavior of the $I_e$ operation and the $\star$-closure under localization and completion will be crucial for obtaining the main result of this paper.
To investigate the latter we need the following generalization of \cite[Lemma 6.6]{LyubeznikSmithCommutationOfTestIdealWithLocalization}.

\begin{lem}\label{Lemma: commutation with localization and completion}
Let $\mathcal{T}$ be a completion of $T$ at a prime ideal $P$.
Let $\alpha\geq 0$ and let $W$ be a submodule of $\mathcal{T}^\alpha$.
For all $e\geq 0$, $W^{[p^e]} \cap T^\alpha=(W\cap T^\alpha)^{[p^e]}$.
\end{lem}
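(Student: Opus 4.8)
The plan is to reduce the module statement to the corresponding statement for ideals, which is exactly \cite[Lemma 6.6]{LyubeznikSmithCommutationOfTestIdealWithLocalization}, by choosing a free basis for $F_*^e T$ and tracking coordinates. First I would observe that since $T$ is regular and $F_*^e T$ is a flat (indeed intersection-flat) $T$-module, the same holds for $\mathcal{T}$: a completion of $T$ at a prime is again regular with $F_*^e$-flat (in fact free, after possibly enlarging; in any case intersection-flat) structure, so the operation $(-)^{[p^e]}$ on submodules of $\mathcal{T}^\alpha$ is identified with $F_{\mathcal{T}}^e(-)$ and is well behaved. The inclusion $(W\cap T^\alpha)^{[p^e]} \subseteq W^{[p^e]}\cap T^\alpha$ is immediate: $W\cap T^\alpha \subseteq W$ gives $(W\cap T^\alpha)^{[p^e]}\subseteq W^{[p^e]}$, and $(W\cap T^\alpha)^{[p^e]}\subseteq T^\alpha$ since $T$ is a subring of $\mathcal{T}$ and raising $T$-vectors to $p^e$th powers stays in $T^\alpha$. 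So the content is the reverse inclusion $W^{[p^e]}\cap T^\alpha \subseteq (W\cap T^\alpha)^{[p^e]}$.

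For the reverse inclusion I would pick a free basis $\mathcal{B}$ of $F_*^e T$ over $T$ (using that $T$ is a polynomial or power-series ring over an $F$-finite field, or otherwise that we may assume $F_*^e T$ free — this is the standing hypothesis that makes the explicit computations of $I_e$ in Proposition~\ref{Proposition: Computing Ie} available; if one only has intersection-flatness one argues via faithfully flat descent instead). The key point is that $\mathcal{B}$ remains a free basis of $F_*^e \mathcal{T}$ over $\mathcal{T}$, because completion at a prime is flat and commutes with finite direct sums, so $F_*^e\mathcal{T} = \mathcal{T}\otimes_T F_*^e T$ is free on the image of $\mathcal{B}$. Now take $w\in W^{[p^e]}\cap T^\alpha$. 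Expanding $w$ in the basis $\mathcal{B}$, $w=\sum_{b\in\mathcal{B}} u_b^{[p^e]} b$ with $u_b\in\mathcal{T}^\alpha$, and the hypothesis $w\in W^{[p^e]}$ translates — by the coordinatewise description underlying Proposition~\ref{Proposition: Computing Ie}(b) — into the statement that each coordinate vector $u_b$ lies in $W$ (more precisely, $w \in W^{[p^e]}$ iff $u_b\in I_e(\mathcal{T}w)\cdot$-type data, but the cleanest route is: $W^{[p^e]}$ is the $\mathcal{T}$-span of $\{v^{[p^e]}\}$, and a vector is in it iff all its $\mathcal{B}$-coordinate vectors lie in $W$, by freeness of the basis). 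Simultaneously, $w\in T^\alpha$ forces each $u_b\in T^\alpha$, since the expansion of a vector of $T^\alpha$ in the $T$-basis $\mathcal{B}$ already has coordinates in $T^\alpha$ and such expansions are unique. Therefore each $u_b\in W\cap T^\alpha$, whence $w=\sum_b u_b^{[p^e]} b\in (W\cap T^\alpha)^{[p^e]}$.

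The step I expect to be the main obstacle is making rigorous the claim that membership in $W^{[p^e]}$ is detected coordinatewise in the basis $\mathcal{B}$, and that this detection is \emph{the same} whether performed over $T$ or over $\mathcal{T}$. Over $\mathcal{T}$ one has the identification $W^{[p^e]} = F_{\mathcal{T}}^e(W)$ and, choosing generators of $W$, $F_{\mathcal{T}}^e(W)$ is the image of $F_{\mathcal{T}}^e$ applied to a presentation; the coordinate description says precisely that $x\in F_{\mathcal{T}}^e(W)$ iff writing $x=\sum_b x_b^{[p^e]} b$ each $x_b$ lies in $W$. For the descent to $T$ one must check that the inclusion $W\cap T^\alpha \hookrightarrow W$ behaves well under $F^e$, i.e.\ that $(W\cap T^\alpha)^{[p^e]} = F_{\mathcal{T}}^e(W) \cap F^e_T(T^\alpha)$ inside $F_{\mathcal{T}}^e(\mathcal{T}^\alpha)$; this is where flatness of $\mathcal{T}$ over $T$ is used, since then $F_T^e(W\cap T^\alpha) = F_T^e(W)\cap F_T^e(T^\alpha)$ extends compatibly. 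Once the coordinate dictionary is set up carefully the rest is the bookkeeping sketched above, and this is essentially the argument of \cite[Lemma 6.6]{LyubeznikSmithCommutationOfTestIdealWithLocalization} carried out with vectors in place of scalars.
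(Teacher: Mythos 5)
Your argument follows essentially the same path as the paper's: the paper reduces to the local case, invokes \cite[Lemma 6.6]{LyubeznikSmithCommutationOfTestIdealWithLocalization} there (calling the passage from ideals to submodules of $T^\alpha$ a ``straightforward modification''), and then descends to $T$; you reconstruct the Lyubeznik--Smith argument explicitly for vectors via expansion in a free $p^e$-basis $\mathcal{B}$. Both amount to the same coordinate computation, so there is no genuine difference in route.

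There is, however, one place where you are too quick. You assert that $\mathcal{B}$ ``remains a free basis of $F_*^e\mathcal{T}$ over $\mathcal{T}$, because completion at a prime is flat and commutes with finite direct sums, so $F_*^e\mathcal{T}=\mathcal{T}\otimes_T F_*^e T$.'' Flatness and commutation with finite direct sums only tell you that $\mathcal{T}\otimes_T F_*^e T$ is $\mathcal{T}$-free on $\mathcal{B}$; they say nothing about the natural map $\mathcal{T}\otimes_T F_*^e T\to F_*^e\mathcal{T}$, which is the actual content of the claim and can fail for a general flat ring map. What makes it work here is the \emph{finiteness} of $F_*^e T$ over $T$: after localizing (harmless, since localization does commute with $F_*^e$), $F_*^e T_P$ is a finite $T_P$-module, so its $PT_P$-adic completion is $\widehat{T_P}\otimes_{T_P}F_*^e T_P$; and because the filtration $\{(P^n)^{[p^e]}T_P\}_n$ is cofinal with $\{P^nT_P\}_n$, this completion is also $F_*^e\widehat{T_P}$. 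You already tacitly assume $F$-finiteness when you restrict to polynomial or power-series rings over an $F$-finite field, so this should be made the stated justification rather than flatness alone. Once that is fixed, the coordinate bookkeeping in your third paragraph is correct: $w=\sum_{b}u_b^{[p^e]}b$ lies in $W^{[p^e]}$ iff every $u_b\in W$ (Frobenius applied coordinatewise, plus freeness), and $w\in T^\alpha$ forces every $u_b\in T^\alpha$ by uniqueness of $\mathcal{B}$-coordinates over the subring. Your closing paragraph about ``descending the intersection along $T\hookrightarrow\mathcal{T}$'' is unnecessary once the basis argument is complete, and as phrased it is circular, since $(W\cap T^\alpha)^{[p^e]}=W^{[p^e]}\cap T^\alpha$ is exactly the statement being proved.
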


\begin{proof}
If $T$ is local with maximal ideal $P$, the result follows from a straightforward modification of the proof
of \cite[Lemma 6.6]{LyubeznikSmithCommutationOfTestIdealWithLocalization}.

We now reduce the general case to the previous case which implies that
$W^{[p^e]} \cap T_P^\alpha=(W\cap T_P^\alpha)^{[p^e]}$. Intersecting with $T^\alpha$ now gives\\
$\displaystyle W^{[p^e]} \cap T^\alpha=(W\cap T_P^\alpha)^{[p^e]}\cap T^\alpha=(W\cap T_P^\alpha\cap T^\alpha)^{[p^e]}=(W\cap T^\alpha)^{[p^e]}$.
\end{proof}

\begin{lem}[cf.~Proposition 7 in \cite{Murru}]\label{Lemma: Ie and localization}
Let $\mathcal{T}$ be a localization of $T$ or a completion at a prime ideal.
For all $e\geq 1$, and all submodules $V\subseteq T^\alpha$, $I_e(V \otimes_T \mathcal{T})$ exists and equals $I_e(V)\otimes_T \mathcal{T}$.
\end{lem}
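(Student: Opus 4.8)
The plan is to reduce the statement about $\mathcal{T}$ to the intersection-flatness hypotheses on $T$, exploiting the characterization of $I_e$ from Theorem~\ref{Theorem: qth root with respect to U}(a) together with Lemma~\ref{Lemma: commutation with localization and completion}. Recall that $I_e(V)$ is characterized as the smallest submodule $L\subseteq T^\alpha$ with $V\subseteq L^{[p^e]}$; equivalently, $I_e(V)=\bigcap\{L : V\subseteq L^{[p^e]}\}$, and one checks (since $T$ is intersection-flat, so $(-)^{[p^e]}$ commutes with intersections of submodules, as observed in the paragraph preceding Theorem~\ref{Theorem: qth root with respect to U}) that $V\subseteq I_e(V)^{[p^e]}$. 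The same characterization holds over $\mathcal{T}$, which is again a regular ring with $F_*^e\mathcal{T}$ intersection flat: for localizations this is because flatness and the freeness/intersection-flatness pass to localizations, and for completions at a prime this is covered by \cite[Proposition 5.3]{KatzmanParameterTestIdealOfCMRings}. So $I_e(V\otimes_T\mathcal{T})$ exists in the first place.

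First I would show $I_e(V)\otimes_T\mathcal{T}\supseteq I_e(V\otimes_T\mathcal{T})$. Since $V\subseteq I_e(V)^{[p^e]}$ in $T^\alpha$, applying the flat functor $-\otimes_T\mathcal{T}$ and using that Frobenius powers of submodules of free modules are computed as $F^e$ (hence commute with flat base change, as $L^{[p^e]}=F_T^e(L)$ for $L\subseteq T^\alpha$ when $T$ is regular) gives $V\otimes_T\mathcal{T}\subseteq (I_e(V)\otimes_T\mathcal{T})^{[p^e]}$ inside $\mathcal{T}^\alpha$. By minimality of $I_e(V\otimes_T\mathcal{T})$ we get the claimed containment.

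For the reverse containment $I_e(V)\otimes_T\mathcal{T}\subseteq I_e(V\otimes_T\mathcal{T})$, I would set $L:=I_e(V\otimes_T\mathcal{T})\subseteq\mathcal{T}^\alpha$ and put $L_0:=L\cap T^\alpha$. The key computation is that $L_0^{[p^e]}\supseteq V$: indeed $V\otimes_T\mathcal{T}\subseteq L^{[p^e]}$, so $V\subseteq L^{[p^e]}\cap T^\alpha$, and by Lemma~\ref{Lemma: commutation with localization and completion} (for completions) — or its evident and easier localization analogue — $L^{[p^e]}\cap T^\alpha=(L\cap T^\alpha)^{[p^e]}=L_0^{[p^e]}$. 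Hence $V\subseteq L_0^{[p^e]}$, so by minimality $I_e(V)\subseteq L_0\subseteq L$, and tensoring with $\mathcal{T}$ gives $I_e(V)\otimes_T\mathcal{T}\subseteq L\otimes_T\mathcal{T}$. Since $L$ is already a $\mathcal{T}$-submodule the natural map $L\otimes_T\mathcal{T}\to\mathcal{T}^\alpha$ has image $L$ (one should check this identification, which is where care is needed: $L\otimes_T\mathcal{T}$ maps onto the $\mathcal{T}$-span of $L$, which is $L$ itself), completing the proof.

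The main obstacle I anticipate is the localization analogue of Lemma~\ref{Lemma: commutation with localization and completion}: the stated lemma is for completions at a prime, and when $\mathcal{T}$ is merely a localization $T_P$ one needs that $W^{[p^e]}\cap T^\alpha=(W\cap T^\alpha)^{[p^e]}$ for $W\subseteq T_P^\alpha$. This is in fact the easy half — localization commutes with the exact functor $F^e$ and with finite intersections in a Noetherian setting — but it must be said explicitly; the reference to \cite{Murru} presumably handles exactly this. A secondary subtlety is the identification $L\otimes_T\mathcal{T}\cong L$ for a $\mathcal{T}$-submodule $L$ of $\mathcal{T}^\alpha$, which uses only that $\mathcal{T}$ is flat over $T$ and $L$ is already a $\mathcal{T}$-module, but should be noted to keep the bookkeeping honest.
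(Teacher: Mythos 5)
Your proposal is correct and follows essentially the same route as the paper: both halves rest on the identity $L^{[p^e]}\cap T^\alpha=(L\cap T^\alpha)^{[p^e]}$ from Lemma~\ref{Lemma: commutation with localization and completion} (plus its easy localization analogue) for the inclusion $I_e(V)\otimes_T\mathcal{T}\subseteq I_e(V\otimes_T\mathcal{T})$, and on flat base change of Frobenius powers plus minimality for the reverse inclusion. The only cosmetic difference is that the paper establishes existence by exhibiting $I_e(V\otimes_T\mathcal{T}\cap T^\alpha)\otimes_T\mathcal{T}$ as the minimal submodule rather than by arguing that $\mathcal{T}$ inherits intersection-flatness; your contraction argument applied to an arbitrary $L$ with $L^{[p^e]}\supseteq V\otimes_T\mathcal{T}$ yields the same conclusion.
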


\begin{proof}
Let $L\subseteq \mathcal{T}^\alpha$ be a submodule, such that
$L^{[p^e]} \supseteq V \otimes_T \mathcal{T}$.
We clearly have
$L^{[p^e]} \cap T^\alpha= (L \cap T^\alpha)^{[p^e]}$ when
$\mathcal{T}$ is a localization of $T$ and when
$\mathcal{T}$ is a completion of $T$ this follows from
the previous Lemma.
We deduce that $(L \cap T^\alpha)\supseteq I_e (V \otimes_T \mathcal{T} \cap T^\alpha)$
and hence
$L\supseteq  (L \cap T^\alpha) \otimes_T \mathcal{T} \supseteq I_e (V \otimes_T \mathcal{T} \cap T^\alpha) \otimes_T \mathcal{T}$.

But since $I_e (V \otimes_T \mathcal{T} \cap T^\alpha) \otimes_T \mathcal{T}$ satisfies
$$\left( I_e (V \otimes_T \mathcal{T} \cap T^\alpha) \otimes_T \mathcal{T}\right) ^{[p^e]}
=
I_e (V \otimes_T \mathcal{T} \cap T^\alpha)^{[p^e]} \otimes_T \mathcal{T}
\supseteq
(V \otimes_T \mathcal{T} \cap T^\alpha) \otimes_T \mathcal{T}
\supseteq
V \otimes_T \mathcal{T}$$
we deduce that  $I_e (V \otimes_T \mathcal{T} \cap T^\alpha) \otimes_T \mathcal{T}$
is the smallest submodule $K\subseteq \mathcal{T}^\alpha$ for which
$K^{[p^e]}\supseteq V \otimes_T \mathcal{T}$.
We conclude that $I_e(V \otimes_T \mathcal{T})$  equals
$I_e (V \otimes_T \mathcal{T} \cap T^\alpha) \otimes_T \mathcal{T}$.


We always have
$$I_e(V \otimes_T \mathcal{T}) = I_e (V \otimes_T \mathcal{T} \cap T^\alpha) \otimes_T \mathcal{T} \supseteq I_e(V)\otimes_T \mathcal{T}.$$
On the other hand
$$ \left( I_e(V)\otimes_T \mathcal{T} \right)^{[p^e]} =
I_e(V)^{[p^e]}\otimes_T \mathcal{T} \supseteq
V \otimes_T \mathcal{T} $$
hence
$I_e(V \otimes_T \mathcal{T})  \subseteq I_e(V)\otimes_T \mathcal{T}$
and thus
$I_e(V \otimes_T \mathcal{T})  = I_e(V)\otimes_T \mathcal{T}$.
\end{proof}

\begin{lem}\label{Lemma: star commutes with completion}
Let $e\geq 1$, let  $U$ be a $\alpha\times \alpha$ matrix with entries in $T$ and let $V\subseteq T^\alpha$.
For any prime $P\subset T$,
$$\widehat{V_P}^{\star U}=\widehat{V^{\star U}}_P .$$
\end{lem}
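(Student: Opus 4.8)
The plan is to characterize $\widehat{V_P}^{\star U}$ as the unique minimal submodule $W \subseteq \widehat{T_P}^\alpha$ containing $V \otimes_T \widehat{T_P}$ and satisfying $UW \subseteq W^{[p^e]}$, and then to verify that $\widehat{V^{\star U}}_P = V^{\star U} \otimes_T \widehat{T_P}$ has exactly these two defining properties, whence the two must coincide by the uniqueness in Theorem \ref{Theorem: qth root with respect to U}(b). Throughout write $\mathcal{T}=\widehat{T_P}$, which is a completion of $T$ at a prime ideal in the sense of Lemma \ref{Lemma: Ie and localization}, and note that $\mathcal{T}$ is again a complete regular ring, hence falls under the standing hypotheses of this section (so all the operations $I_e$, $(-)^{[p^e]}$, and $\star$-closures make sense over $\mathcal{T}$).

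First I would record that $V^{\star U} \otimes_T \mathcal{T}$ contains $V \otimes_T \mathcal{T}$, which is immediate from $V \subseteq V^{\star U}$ and right-exactness of $- \otimes_T \mathcal{T}$. Next I would check the stability condition: since $U V^{\star U} \subseteq (V^{\star U})^{[p^e]}$ holds over $T$, tensoring with $\mathcal{T}$ and using that $(-)^{[p^e]}$ commutes with the flat base change $- \otimes_T \mathcal{T}$ (this is the regular-ring identification $L^{[p^e]} \cong F_T^e(L)$ together with flatness of $\mathcal{T}$ over $T$, exactly as used repeatedly in the proof of Lemma \ref{Lemma: Ie and localization}) gives $U \cdot (V^{\star U}\otimes_T\mathcal{T}) \subseteq (V^{\star U})^{[p^e]}\otimes_T\mathcal{T} = (V^{\star U}\otimes_T\mathcal{T})^{[p^e]}$. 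So $V^{\star U}\otimes_T\mathcal{T}$ is one of the submodules $W$ in the statement of Theorem \ref{Theorem: qth root with respect to U}(b) over $\mathcal{T}$, and therefore it contains the minimal such submodule, i.e. $\widehat{V_P}^{\star U} \subseteq V^{\star U}\otimes_T \mathcal{T} = \widehat{V^{\star U}}_P$.

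For the reverse inclusion I would use the explicit iterative construction of the $\star$-closure underlying Theorem \ref{Theorem: qth root with respect to U}(b): $V^{\star U}$ is the union (stabilizing) of the ascending chain $V_0 = V$, $V_{n+1} = V_n + I_e(U V_n)$, where Proposition \ref{Proposition: Computing Ie}(a) guarantees $I_e$ is additive so each step is well-defined, and the same recursion over $\mathcal{T}$ starting from $V\otimes_T\mathcal{T}$ produces $\widehat{V_P}^{\star U}$. The key point is that this recursion commutes with $- \otimes_T \mathcal{T}$ term by term: tensoring is additive, $U$ has entries in $T$ so $U(V_n\otimes_T\mathcal{T}) = (UV_n)\otimes_T\mathcal{T}$, and $I_e$ commutes with the base change $- \otimes_T \mathcal{T}$ by Lemma \ref{Lemma: Ie and localization}. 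An induction on $n$ then shows $V_n \otimes_T \mathcal{T}$ equals the $n$th term of the $\mathcal{T}$-recursion; passing to the union (the chain stabilizes in finitely many steps by Noetherianity, and $-\otimes_T\mathcal{T}$ preserves the stabilized value) gives $V^{\star U}\otimes_T\mathcal{T} = \widehat{V_P}^{\star U}$, which is the claimed equality. Alternatively, once $\widehat{V^{\star U}}_P$ is shown to satisfy both defining properties it is automatically contained in the minimal $W$, giving the reverse inclusion directly without the explicit recursion.

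The main obstacle is making precise that $I_e$ and $(-)^{[p^e]}$ commute with the base change $T \to \widehat{T_P}$ at the level of \emph{submodules} of $\mathcal{T}^\alpha$ that are not themselves extended from $T$ — but this is precisely what Lemma \ref{Lemma: Ie and localization} and the flat-base-change identification $L^{[p^e]}\cong F_T^e(L)$ supply, so the proof is essentially an assembly of the preceding lemmas. One should be slightly careful that in the recursion the intermediate modules $V_n$ are finitely generated so that $-\otimes_T\mathcal{T}$ behaves well and Lemma \ref{Lemma: Ie and localization} applies at each stage; this holds because $T^\alpha$ is Noetherian. With that observed, the argument goes through cleanly.
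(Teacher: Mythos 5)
Your argument is correct and, at its core, identical to the paper's: the paper also proves the lemma by comparing, term by term via induction and Lemma \ref{Lemma: Ie and localization}, the ascending chains $V_{i+1}=V_i+I_e(UV_i)$ over $T$ and over $\widehat{T_P}$ whose stable values are the two $\star$-closures. Your preliminary one-sided inclusion via minimality is a correct but redundant extra step, since the recursion already yields the equality.
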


\begin{proof}
As in the proof of \cite[Theorem 3.2]{KatzmanZhangAnnihilatorsOfArtinianModules}
define inductively $V_0=V$ and
$V_{i+1}=I_1(U V_i) + V_i$ for all $i\geq 0$, and also
$W_0=\widehat{V_P}$ and
$W_{i+1}=I_1(U W_i) + W_i$ and note that
the ascending chains of modules $\{V_i\}_{\{i\geq 0\}}$ and $\{W_i\}_{\{i\geq 0\}}$ have stable values $V^{\star U}$ and $\widehat{V}^{\star U}$, respectively.
An easy induction on $i\geq 0$ together with Lemma \ref{Lemma: Ie and localization}  shows that $W_{i}=\widehat{(V_{i})_P}$ for all $i\geq 0$, and the result follows.
\end{proof}

\section{Another look at $\mathcal{C}_e$}
Throughout this section we adopt the notation of \ref{Notation: complete regular local}
and  have a closer look at Artinian $R[\Theta; f^e]$-modules, starting with the possible
$R[\Theta; f^e]$-modules structures of $E^\alpha$ for $\alpha\geq 1$.

The module $\mathcal{F}^e(E)$ of Frobenius maps on $E$ is isomorphic to $R$ (\cite[Example 3.7]{LyubeznikSmithCommutationOfTestIdealWithLocalization})
and the generator of this module, the \emph{natural Frobenius map on E}, can be described explicitly as follows.
The regular local ring $R$ is isomorphic to a power series ring $\mathbb{K}[\![ x_1, \dots, x_n ]\!]$ for some field $\mathbb{K}$ of
characteristic $p$, and $E$ is then isomorphic to the module of inverse polynomials
$\mathbb{K}[ x_1^-, \dots, x_n^- ]$ (cf.~Example 12.4.1 in \cite{BrodmannSharpLocalCohomology})
which has a Frobenius map given by
$T \lambda x_1^{\alpha_1} \cdot \ldots \cdot x_n^{\alpha_n}=\lambda^p x_1^{p\alpha_1} \cdot \ldots \cdot x_n^{p\alpha_n}$ for all $\alpha_1, \dots, \alpha_n<0$.
Now $\Theta = T^e$ is a generator for $\mathcal{F}^e(E)$, and
$\mathcal{F}^e(E^\alpha)$ can be identified with the $R$-module of $\alpha\times \alpha$ matrices with entries  in $R$: we associate to such a matrix $U$
the Frobenius map $\Theta: E^\alpha \rightarrow E^\alpha$ given by
$$\Theta
\left[
\begin{array}{l}
z_1\\
 \vdots\\
z_\alpha
\end{array}
\right]=
U^t
\left[
\begin{array}{l}
T^e z_1\\
 \vdots\\
T^e z_\alpha
\end{array}
\right]
.$$

Now any Artinian $R[\Theta; f^e]$-module $M$ can be embedded into $E^\alpha$ for some $\alpha\geq 1$ and
an application of the Matlis dual yields a diagram with exact rows
\begin{equation}\label{CD3}
\xymatrix{
R^\alpha \ar@{>}[r]^{h}  & M^\vee \ar@{>}[d]^{(1\otimes \Theta)^\vee}   \ar@{>}[r]^{} & 0\\
R^\alpha \ar@{>}[r]_{F_R^e(h)}   & F_R^e(M^\vee)   \ar@{>}[r]^{} &0 \\
}.
\end{equation}
where the vertical map is $\Delta^e(M)$.
Choose a presentation $\Image R^\beta \xrightarrow{A} R^\alpha$ of $\ker h$.
If we identify $M^\vee$ with $\Coker A$,
we can then identify the vertical map above with multiplication by some $\alpha \times \alpha$ matrix $U$ (which must satisfy $\Image U A \subseteq \Image A^{[p^e]}$.)
An application of $\Psi^e$ to the commutative diagram
\begin{equation}\label{CD4}
\xymatrix{
R^\alpha \ar@{>}[r]^{h}  \ar@{>}[d]^{U} & \Coker A \ar@{>}[d]^{U}   \ar@{>}[r]^{} & 0\\
R^\alpha \ar@{>}[r]_{F_R^e(h)}   & \Coker A^{[p^e]}  \ar@{>}[r]^{} &0 \\
}.
\end{equation}
gives an embedding $\Ann_{E^\alpha} A^t \subseteq E^\alpha$ of $R[\Theta; f^e]$-modules where
$\Ann_{E^\alpha} A^t$ is isomorphic to $M$ as $R[\Theta; f^e]$-modules and where the action of $\Theta$ on $E^\alpha$ is given by $U^t T$.

We summarize this discussion with the following corollary.
\begin{cor}
Any Artinian $R[\Theta; f^e]$-module can be embedded as an $R[\Theta; f^e]$-module into some $E^\alpha$ on which the action of $\Theta$ is given by $U^t T$.
Consequently, any such module is isomorphic to $\Ann_{E^\alpha} A^t$ for some $\beta \times \alpha$ matrix $A$ and where
the action of $\Theta$ is the restriction of the action of $U^t T^e$ of $E^\alpha$ to $\Ann_{E^\alpha} A^t$ where $U$ is a
$\alpha \times \alpha$ matrix satisfying $\Image U A \subseteq \Image A^{[p^e]}$.
\end{cor}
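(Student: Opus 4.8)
The plan is to assemble the corollary directly from the discussion preceding it, which has essentially already done all the work; the task is to package it cleanly. The starting point is the standard fact that any Artinian $R$-module $M$ embeds into a finite direct sum $E^\alpha$ of copies of the injective hull $E$, since $M$ has finite-dimensional socle. The content to be added is that when $M$ carries an $R[\Theta; f^e]$-module structure, this embedding can be taken to be $R[\Theta; f^e]$-linear for a suitable choice of Frobenius action on $E^\alpha$.

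\emph{First} I would fix an $R$-linear embedding $M \hookrightarrow E^\alpha$ and dualize: applying $(-)^\vee$ gives a surjection $(E^\alpha)^\vee = R^\alpha \twoheadrightarrow M^\vee$, i.e.\ a presentation of the Noetherian module $M^\vee$. Choosing generators for the kernel yields a matrix $A$ (of size $\beta\times\alpha$) with $M^\vee \cong \Coker A$, exactly as in diagram (\ref{CD3}). \emph{Next}, the Frobenius map $\Theta$ on $M$ corresponds under $\Delta^e$ to the vertical map $(1\otimes\Theta)^\vee : M^\vee \to F_R^e(M^\vee)$; lifting this map along the two presentations $R^\alpha \twoheadrightarrow M^\vee$ and $R^\alpha \twoheadrightarrow F_R^e(M^\vee) = \Coker A^{[p^e]}$ produces an $\alpha\times\alpha$ matrix $U$ over $R$, well-defined on cokernels precisely because $\Image UA \subseteq \Image A^{[p^e]}$, giving the commutative diagram (\ref{CD4}).

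\emph{Then} I would apply the functor $\Psi^e$ to (\ref{CD4}). Since $\Psi^e$ is an exact contravariant functor and $\Psi^e\circ\Delta^e$ is the identity on $\mathcal{C}_e$ (after identifying $(-)^{\vee\vee}$ with the identity), it sends the surjection $R^\alpha \twoheadrightarrow \Coker A$ to an injection, and sends $\Coker A = M^\vee$ back to $M$. A direct computation using the description of $\mathcal{F}^e(E^\alpha)$ as $\alpha\times\alpha$ matrices over $R$ — given in the paragraph preceding the corollary, with $\Theta$ acting on a column $z$ by $U^t[T^e z_1,\dots,T^e z_\alpha]^t$ — identifies $\Psi^e(R^\alpha \twoheadrightarrow R^\alpha)$ with $E^\alpha$ equipped with the Frobenius action $U^t T^e$, and identifies the image of the injection as $\Ann_{E^\alpha} A^t$. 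This is an $R[\Theta; f^e]$-submodule because the displayed condition $\Image UA \subseteq \Image A^{[p^e]}$ dualizes to the statement that $U^t T^e$ preserves $\Ann_{E^\alpha} A^t$. Since $\Psi^e$ of the whole diagram is an isomorphism of diagrams, $M \cong \Ann_{E^\alpha} A^t$ as $R[\Theta; f^e]$-modules, which is the assertion.

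The only genuine point requiring care — \emph{the main obstacle} — is the bookkeeping in the transposes: tracking how the condition $\Image UA \subseteq \Image A^{[p^e]}$ on the Noetherian side corresponds to $U^t T^e$ stabilizing $\Ann_{E^\alpha} A^t$ on the Artinian side, and checking that the Frobenius action one reads off from $\mathcal{F}^e(E^\alpha)$ really is the one produced by $\Psi^e$. Both are routine Matlis-duality computations once one writes out $\Ann_{E^\alpha} A^t$ as $\{ z \in E^\alpha \mid A^t z = 0\}$ and uses that $\Theta(A^t z) = A^{[p^e]t} U^t T^e z$, so $A^t z = 0$ forces $A^t(U^t T^e z) = 0$ up to the relations encoded by $A^{[p^e]}$; no new ideas are needed beyond the functorial formalism of $\Delta^e$ and $\Psi^e$ already set up.
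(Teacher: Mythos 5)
Your argument reproduces the paper's own reasoning in all essentials: embed $M \hookrightarrow E^\alpha$ over $R$, dualize to obtain the presentation $\Coker A \cong M^\vee$, lift $\Delta^e(M)$ to a matrix $U$ with $\Image UA \subseteq \Image A^{[p^e]}$, and apply $\Psi^e$ to diagram (\ref{CD4}) to recover the $R[\Theta; f^e]$-embedding $M \cong \Ann_{E^\alpha} A^t \hookrightarrow E^\alpha$ with $\Theta = U^t T^e$. Only a small notational slip in your final stability check needs repair: writing $UA = A^{[p^e]} C$, the computation should read $A^t(U^t T^e z) = (UA)^t T^e z = C^t\,(A^t)^{[p^e]}\,T^e z = C^t\,T^e(A^t z) = 0$, rather than the displayed identity you give.
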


\section{The global parameter test ideal}
\label{Section: The global parameter test ideal}

As the first application of the tools we have developed thus far, in this section we will construct a global parameter test ideal, {\it i.e.,}
an ideal whose localization at each prime ideal is the parameter test ideal of the localization of the ring.
To this end, we will use $R$ to denote a regular Noetherian ring of prime characteristic $p$ such that $F^e_*R$ are intersection flat for all $e\geq 0$. We will use $S$ to denote a
homomorphic image of $R$
which has a completely stable parameter test element (cf.~\cite[Section 6]{HochsterHunekeTC1}).
If $S$ has a parameter test element, then the parameter test ideal $\tau$
is generated by all parameter test elements, the non-$F$-rational locus of $S$ is given by $V(\tau)$, and we recover the
well known fact that the $F$-rational locus is open (cf.~\cite{VelezOpennessOfTheFRationalLocus}).

Given any \emph{local} ring $(S,\mathfrak{m})$ we denote
\[
\tau^S=\bigcap_{{\rm parameter\ ideals\ }I\ {\rm in\ }S}(I:I^*)
\]
the {\it parameter test ideal of $S$}.
In \cite{SmithTestIdeals} Karen Smith showed that
when $S$ is local and Cohen-Macaulay, $\tau^S$ localizes to parameter test ideals of the localization.
The results of this section extend the existence of such ideals to the non-local case
(Theorem \ref{Theorem: global pti in CM rings}), to the generalized Cohen-Macaulay case
(Theorem \ref{theorem:  pti for genCM}), and to rings with finitely many isolated non-Cohen Macaulay point,
each being an isolated non-$F$ rational points
(Theorem \ref{Theorem: pti in isolated non-CM points}).
Furthermore, these ideals $\tau$ are given explicitly in terms of operations on ideals
which are readily computable.\footnote{See Macaulay2 package ``FSing'' \cite{Macaulay}.}

First we note that rings with completely stable test elements are abundant.

\begin{thm}
\label{Theorem: existence of test elements}
The following rings possess completely stable test elements:
\begin{itemize}
\item Geometrically reduced, finitely generated algebras over any field of characteristic $p$ (\cite[Corollary 1.5.5]{HochsterHunekeTightClosureInEqualCharactersticZero}).
In this setting completely stable test elements are easily computable as minors of a Jacobian matrix.
\item $F$-finite and reduced rings (cf.~\cite[Theorem 5.10]{HochsterHunekeFRegularityTestElementsBaseChange}).
\item Reduced, finitely generated $B$-algebras where $(B, m, K)$ is a complete local ring with coefficient field $K$ of characteristic $p$
(cf.~\cite[Theorem 6.20]{HochsterHunekeFRegularityTestElementsBaseChange}).
\end{itemize}
\end{thm}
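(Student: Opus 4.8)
The statement is essentially a compilation of known results from the Hochster--Huneke theory of test elements, so the "proof" is really a matter of citing the right theorems and explaining why the hypotheses are met; the plan is to handle the three bullet points one at a time. For the first bullet, I would invoke \cite[Corollary 1.5.5]{HochsterHunekeTightClosureInEqualCharactersticZero}, which asserts precisely that a geometrically reduced finitely generated algebra over a field of characteristic $p$ has a completely stable test element. The refinement that such an element may be taken to be a suitable minor of the Jacobian matrix follows from the general principle (going back to the Lipman--Sathaye Jacobian criterion, as exploited by Hochster--Huneke) that a nonzerodivisor $c$ lying in the defining ideal of the singular locus --- in particular an appropriate Jacobian minor --- is a test element once one knows the ring is reduced and the singular locus is cut out the way the Jacobian criterion predicts; I would state this and point to the relevant discussion in \cite{HochsterHunekeFRegularityTestElementsBaseChange}.

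For the second bullet, the claim is that an $F$-finite reduced ring has a completely stable test element; this is exactly \cite[Theorem 5.10]{HochsterHunekeFRegularityTestElementsBaseChange}, so here I would simply cite it, perhaps recalling that "$F$-finite" means $F_*A$ is a finitely generated $A$-module and that $F$-finiteness is inherited by localizations and completions, which is what makes the test element produced there \emph{completely} stable rather than merely stable. For the third bullet --- reduced finitely generated algebras over a complete local ring $(B,m,K)$ with coefficient field $K$ of characteristic $p$ --- the reference is \cite[Theorem 6.20]{HochsterHunekeFRegularityTestElementsBaseChange}, and again the argument is a citation: the point of that theorem is that the complete local base ring $B$ with its coefficient field behaves well enough under the relevant base change (the Gamma-construction of Hochster--Huneke) to produce a test element that survives localization and completion.

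Strictly speaking there is essentially no mathematical obstacle to overcome here --- the theorem is a convenient packaging of existing literature so that the later sections can invoke "completely stable test element" as a standing hypothesis --- so the only thing requiring care is making sure the hypotheses stated in each bullet match the hypotheses of the cited theorem verbatim (e.g.\ that "geometrically reduced" is the correct adjective in the first item, that one does not secretly need excellence or $F$-finiteness in the third item beyond what the complete-local-base hypothesis already supplies, and that "completely stable" is used in the same sense as in \cite[Section 6]{HochsterHunekeTC1}, namely that the image of the element is a test element in every localization and in every localization of every completion). I would therefore keep the write-up short: one sentence of setup recalling the definition of completely stable test element, then three sentences, one per bullet, each reducing to the indicated citation, with the Jacobian-minor remark appended to the first.
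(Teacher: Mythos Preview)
Your proposal is correct and matches the paper's approach: the paper offers no proof at all beyond the citations already embedded in the statement, treating the theorem purely as a convenient summary of the Hochster--Huneke results for later use. Your plan to reduce each bullet to its cited theorem, with the Jacobian-minor remark appended to the first, is exactly in this spirit (and if anything slightly more detailed than what the paper provides).
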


The result of this section rely of the following result.

\begin{thm}
[cf.~Proposition 2.5 in \cite{SmithFRatImpliesRat}]
\label{Theorem: characterization of parameter test ideal}
Let $(S, \mathfrak{m})$ be a local Cohen-Macaulay ring with parameter test-element $c$.
Write $H=\HH^{\dim S}_\mathfrak{m}(S)$, $G=\{ a\in H \,|\, c a=0 \}$ and let $L$ be the largest
$S$-submodule of $G$ stable under the natural Frobenius map on $H$.
The parameter test ideal $\tau$ of $S$ is
the annihilator of $\displaystyle 0^*_{{\HH^{\dim S}_{\fm}} (S)}$ and is
given by $(0:_S L)$.
\end{thm}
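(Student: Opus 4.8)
The plan is to reduce the statement to the definition of the parameter test ideal together with the characterization of tight closure of the top local cohomology module in terms of the Frobenius action. First I would recall that since $S$ is Cohen-Macaulay of dimension $n=\dim S$, for any system of parameters $x_1,\dots,x_n$ the Koszul cohomology computes $H=\HH^n_\fm(S)$ as the direct limit of $S/(x_1^t,\dots,x_n^t)$, and that the tight closure of a parameter ideal $I=(x_1,\dots,x_n)$ can be detected inside $H$: specifically, for $a\in S$ the image $[a]$ of $a$ in $S/I \hookrightarrow H$ (under the canonical map into the limit) lies in $0^*_H$ if and only if $a\in I^*$. This is the standard fact (from Hochster--Huneke, see also Smith) that $(I:I^*)$ is, for each parameter ideal, the annihilator of the corresponding ``socle-type'' element, and that the colon ideal $\bigcap_I (I:I^*)$ over all parameter ideals equals $\Ann_S 0^*_H$.

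Next I would prove the equality $\Ann_S 0^*_H = (0:_S L)$, where $L$ is the largest $S$-submodule of $G=\{a\in H: ca=0\}$ stable under the natural Frobenius $T$. For the inclusion $0^*_H \subseteq L$: given $a\in 0^*_H$, by definition of tight closure in $H$ there is a $c'$ not in any minimal prime with $c' T^e a = 0$ for all $e\gg 0$; since $c$ is a parameter test element it can be used as such a $c'$ (this is exactly what the parameter test property buys us, translated via the limit description of $H$), so $ca=0$ and moreover $cT^e a = 0$ for all $e\ge 0$, which shows $a\in G$. The $S$-submodule of $H$ generated by $\{T^e a : e\ge 0\}$ then lies in $G$ and is Frobenius-stable, hence contained in $L$; so $0^*_H\subseteq L$. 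For the reverse, $L\subseteq 0^*_H$ because every element $b\in L$ satisfies $cT^eb=0$ for all $e$ (as $L\subseteq G$ and $L$ is $T$-stable, $T^eb\in L\subseteq G$), and $c$ being a parameter test element this certifies $b\in 0^*_H$ via the definition of tight closure of the zero submodule. Therefore $L=0^*_H$ and taking annihilators gives $\tau=\Ann_S 0^*_H=(0:_S L)$.

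The remaining point is the identification $\tau=\bigcap_I (I:I^*)=\Ann_S 0^*_H$; I would cite Smith's work (\cite{SmithFRatImpliesRat}, \cite{SmithTestIdeals}) and Hochster--Huneke for the fact that in a Cohen-Macaulay local ring with a parameter test element the parameter test ideal is exactly the annihilator of the tight closure of zero in the top local cohomology, using that the parameter ideals are cofinal and the colon/annihilator behaves well under the direct limit. The one genuinely delicate step is checking that a single parameter test element $c$ is enough to both (i) detect all elements of $0^*_H$ and (ii) force $cT^ea=0$ rather than merely $c a^{p^e}\in I^{[p^e]}$ along a cofinal system of parameter ideals — i.e.\ passing cleanly between the ideal-theoretic definition of $I^*$ for parameter ideals and the module-theoretic tight closure $0^*_H$. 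This hinges on the colimit compatibility of the maps $S/(x^{[t]}) \to S/(x^{[pt]})$ with Frobenius and on $c$ working uniformly, which is where the hypothesis that $c$ is a (completely stable) parameter test element is used in full force.

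\begin{proof}
Let $n=\dim S$ and $H=\HH^n_\fm(S)$. Since $S$ is Cohen--Macaulay, for any system of parameters $\underline{x}=x_1,\dots,x_n$ we have $H=\varinjlim_t S/(x_1^t,\dots,x_n^t)$, with transition maps given by multiplication by $x_1\cdots x_n$, and the natural Frobenius $T$ on $H$ is induced by the $p^e$-power map on each $S/(x_1^t,\dots,x_n^t)$. It is standard (see \cite[Proposition 2.5]{SmithFRatImpliesRat} and \cite{SmithTestIdeals}) that, for the parameter ideal $I=(x_1,\dots,x_n)$, an element $a\in S$ lies in $I^*$ if and only if the class $[a]\in S/I\hookrightarrow H$ lies in $0^*_H$, and that as $I$ ranges over all parameter ideals these classes generate $0^*_H$; consequently
\[
\tau=\bigcap_{I}(I:I^*)=\Ann_S 0^*_H .
\]
It remains to show $\Ann_S 0^*_H=(0:_S L)$, for which it suffices to prove $0^*_H=L$.

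First, $0^*_H\subseteq L$. Let $a\in 0^*_H$. By definition of the tight closure of the zero submodule of $H$, there is an element $c'$ avoiding all minimal primes with $c'T^e a=0$ for all $e\gg 0$; since $c$ is a parameter test element, it may serve as such a $c'$, and moreover the parameter test property gives $cT^e a=0$ for all $e\ge 0$. Hence $ca=0$, so $a\in G$, and the $S$-submodule $S[T]a\subseteq H$ generated by $\{T^e a : e\ge 0\}$ is a Frobenius-stable submodule of $G$, whence $a\in S[T]a\subseteq L$.

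Conversely, $L\subseteq 0^*_H$. If $b\in L$ then $T^e b\in L\subseteq G$ for all $e\ge 0$, so $cT^e b=0$ for all $e\ge 0$; since $c$ is a parameter test element (in particular avoids all minimal primes), this certifies $b\in 0^*_H$. Therefore $L=0^*_H$, and taking annihilators yields $\tau=\Ann_S 0^*_H=(0:_S L)$.
\end{proof}
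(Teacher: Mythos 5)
Your proof is correct and follows essentially the same route as the paper: both cite Smith's result for $\tau=\Ann_S 0^*_H$, both use the Cohen--Macaulay direct-limit description of $H$ with injective transition maps and the identification $L=\bigcap_{e\ge 0}\Ann_H cT^e$, and both deduce $(0:_S L)=\tau$. The only cosmetic difference is that you phrase the middle step as the equality $L=0^*_H$ and then take annihilators, whereas the paper instead identifies $L$ with $\varinjlim (\underline{x}^t)^*/(\underline{x}^t)$ and proves the two annihilator inclusions directly; these are the same argument in different clothing.
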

\begin{proof}
The first statement is a restatement of \cite[Proposition 4.4(ii)]{SmithTestIdeals}.
Fix any system of parameters  $x_1, \dots, x_d\in \mathfrak{m}$;
$H$ can be constructed as a direct limit of Koszul cohomology, resulting in
\begin{equation}\label{Equation: 2}
H=\lim_{\rightarrow\atop{t}} S/(x_1^t, \dots, x_d^t)
\end{equation}
where the maps in the direct limit system are given by multiplication by $x=x_1 \dots x_d$.
If $S$ is Cohen-Macaulay, as in the hypothesis of this theorem, $x_1, \dots, x_d$ form a regular sequence and the maps in the direct limit system
(\ref{Equation: 2}) are injective.

Now every element in $H$ is the image of some $s + (x_1^t, \dots, x_d^t) \in S/(x_1^t, \dots, x_d^t)$ in the direct limit, and
we denote such an element $[s + (x_1^t, \dots, x_d^t)]$.
We can also describe the action of natural Frobenius $T$ on $H$: $[s + (x_1^t, \dots, x_d^t)]$
is mapped to
$[s^p + (x_1^{pt}, \dots, x_d^{pt})]$.

Let $t\geq 1$ and pick any $s\in S$.
Since $c$ is a test-element,
$c s^{p^e} \in (x_1^{p^e t}, \dots, x_d^{p^e t})$ for all $e\geq 0$
if and only if
$s\in (x_1^t, \dots, x_d^t)^*$,
hence
$$L=\bigcap_{e\geq 0} \Ann_H c T^e=
\lim_{\rightarrow\atop{t}} (x_1^t, \dots, x_d^t)^*/(x_1^t, \dots, x_d^t) .$$

Clearly, $\tau L=0$ and $\tau \subseteq (0:_S L)$.
Also $(0:_S L) L=0$  and the injectivity of the maps in (\ref{Equation: 2})
implies $(0:_S L) \left((x_1^t, \dots, x_d^t)^{[p^e]}\right)^* \subseteq
(x_1^{p^e t}, \dots, x_d^{p^e t})$ for all $e\geq 0$, hence $(0:_S L) \subseteq \tau$.
\end{proof}

Write $S=R/I$ where $R$ is a regular Noetherian ring of prime characteristic $p$ such that $F^e_*R$ are intersection flat for all $e\geq 0$. We
represent the map
$\theta: \Ext^{\dim(R) -\dim(S)}_{R} (R/I, R)
\xrightarrow[]{} \Ext^{\dim(R) -\dim(S)}_{R} (R/I^{[p]},R) $ as
$\Coker A \xrightarrow[]{U} \Coker A^{[p]}$ for
a $\alpha\times \beta$ matrix $A$ and $\alpha \times \alpha$ matrix $U$ with entries in $R$.

We now obtain the following non-local result.

\begin{thm}\label{Theorem: global pti in CM rings}
Let  $c\in R$  be such that its images in all completions of $S$ at all prime ideals are parameter test-elements.
The localization (resp.~completion) of
$$\left(0 :_{R} \frac{R^\alpha}{(\Image A+ c R^\alpha)^{\star U}} \right)$$
at any prime ideal $P$ in the Cohen-Macaulay locus of $S$
is a parameter test-ideal of $S_P$ ($\widehat{S_P}$, resp.).
\end{thm}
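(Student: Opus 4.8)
The plan is to reduce the global statement to Smith's local theorem (Theorem \ref{Theorem: characterization of parameter test ideal}) by showing that the indicated ideal, after localizing or completing at a prime $P$ in the Cohen--Macaulay locus of $S$, matches the explicit description $(0:_S L)$ of the local parameter test ideal. The first step is to fix a prime $P$ in the Cohen--Macaulay locus and pass to the completion: write $\mathcal{R}=\widehat{R_P}$, $\mathcal{S}=\widehat{S_P}=\mathcal{R}/I\mathcal{R}$, and $d=\dim S_P=\dim \mathcal{S}$. Since $P$ lies in the Cohen--Macaulay locus, $\mathcal{S}$ is Cohen--Macaulay, and by hypothesis the image of $c$ in $\mathcal{S}$ is a parameter test element. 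Setting $i=\dim\mathcal{R}-d$, the map $\theta$ localizes and completes (using flatness of completion over the Noetherian local ring $R_P$, and the fact that $\Ext$ commutes with flat base change for finitely generated modules) to the map $\Ext^{i}_{\mathcal{R}}(\mathcal{R}/I\mathcal{R},\mathcal{R})\to\Ext^{i}_{\mathcal{R}}(\mathcal{R}/(I\mathcal{R})^{[p]},\mathcal{R})$, represented by $\Coker \widehat{A_P}\xrightarrow{\widehat{U_P}}\Coker \widehat{A_P}^{[p]}$; by Proposition \ref{Proposition: Phi of local cohomology} this is $\Delta^1$ applied to $H=\HH^{d}_{\fm\mathcal{S}}(\mathcal{S})$ with its natural Frobenius action.

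The second step applies the functor $\Psi^e$ (for $e=1$) together with the explicit description from Section~5: the Matlis dual $H^\vee$ is $\Coker \widehat{A_P}$, and $H$ itself is realized as $\Ann_{E^\alpha}\widehat{A_P}^t\subseteq E^\alpha$ with $\Theta$ acting by $\widehat{U_P}^t T$. Under Matlis duality over $\mathcal{R}$, the submodule $G=\{a\in H: ca=0\}$ corresponds to the quotient $\mathcal{R}^\alpha/(\Image\widehat{A_P}+c\mathcal{R}^\alpha)$, and—this is the crux—the \emph{largest $\mathcal{S}$-submodule of $G$ stable under $\Theta$} corresponds, under the duality that keeps track of Frobenius, to the \emph{smallest} quotient $\mathcal{R}^\alpha/N$ with $\Image\widehat{A_P}+c\mathcal{R}^\alpha\subseteq N$ and $\widehat{U_P}N\subseteq N^{[p]}$. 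By Theorem \ref{Theorem: qth root with respect to U}(b) and the definition of star-closure, that smallest such $N$ is exactly $(\Image\widehat{A_P}+c\mathcal{R}^\alpha)^{\star \widehat{U_P}}$, so $L^\vee\cong \mathcal{R}^\alpha/(\Image\widehat{A_P}+c\mathcal{R}^\alpha)^{\star\widehat{U_P}}$. Since the annihilator of a module equals the annihilator of its Matlis dual, Theorem \ref{Theorem: characterization of parameter test ideal} gives
\[
\tau^{\mathcal{S}}=(0:_{\mathcal{S}}L)=\left(0:_{\mathcal{R}}\frac{\mathcal{R}^\alpha}{(\Image \widehat{A_P}+c\mathcal{R}^\alpha)^{\star\widehat{U_P}}}\right).
\]

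The final step is to commute everything with localization and completion. Lemma \ref{Lemma: Ie and localization} and Lemma \ref{Lemma: star commutes with completion} show that $(\Image A+cR^\alpha)^{\star U}$ commutes with localization at $P$ and with the subsequent completion, i.e.\ its image in $\mathcal{R}^\alpha$ is $(\Image\widehat{A_P}+c\mathcal{R}^\alpha)^{\star\widehat{U_P}}$; then since the relevant modules are finitely generated, forming the annihilator commutes with the flat base change $R\to\mathcal{R}$, so $\left(0:_R \mathcal{R}^\alpha/(\Image A+cR^\alpha)^{\star U}\right)\otimes_R\mathcal{R}$ equals the right-hand side above, namely $\tau^{\mathcal{S}}=\tau^{\widehat{S_P}}$. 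The Cohen--Macaulay local case of Smith's result (that $\tau^{\mathcal{S}}$ is a parameter test ideal) finishes the completion statement, and the localization statement follows by the same argument stopping one step earlier (localizing but not completing), which is legitimate because $c$'s image in $S_P$ is then also a parameter test element, $S_P$ is excellent enough for the local cohomology and $\Ext$ formalism to apply, and Lemma \ref{Lemma: Ie and localization} already covers localization.

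The step I expect to be the main obstacle is the second one: verifying that the order-reversing correspondence furnished by the pair $(\Delta^e,\Psi^e)$ genuinely sends ``largest Frobenius-stable submodule of $G$'' to ``smallest $\star U$-closed submodule containing $\Image A+cR^\alpha$.'' This requires carefully unwinding how the $\Theta$-action on $H=\Ann_{E^\alpha}A^t$ (given by $U^t T$) dualizes to the condition $UW\subseteq W^{[p]}$ on submodules $W$ of $R^\alpha$, confirming that the correspondence is inclusion-reversing and bijective on the relevant sub-lattices, and checking that the $\mathcal{S}$-module (as opposed to merely $\mathcal{R}$-module) condition is automatic because $I\mathcal{R}$ annihilates $H$ and hence $\Image\widehat{A_P}$ is built into every $N$ under consideration. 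The exactness of $\Delta^e$ and $\Psi^e$ and the Corollary at the end of Section~2 (relating vanishing of $\Theta$ to vanishing of maps) are the tools that make this precise.
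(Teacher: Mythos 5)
Your proposal is correct and follows essentially the same route as the paper: reduce to the complete local Cohen--Macaulay case, use Smith's characterization of $\tau$ as $(0:_S L)$, identify $L$ under the Frobenius-tracking duality with the star-closure $(\Image \widehat{A}+c\widehat{R_P}^\alpha)^{\star U}$ as the smallest submodule satisfying the three defining conditions, and then invoke Lemmas \ref{Lemma: Ie and localization} and \ref{Lemma: star commutes with completion} to descend to $R$. The only (harmless) divergence is at the end: rather than rerunning the argument over $S_P$ directly, the paper deduces the localization statement from the completed one by noting that $\HH^{\dim S_P}_{PR_P}(S_P)$ is already a $\widehat{S_P}$-module, which avoids having to justify the Matlis-duality formalism over the non-complete ring $R_P$.
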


\begin{proof}

Let $P\subset R$ be a prime ideal in the Cohen-Macaulay locus of $S$.
Since  $\HH_{P {R_P}}^{\dim  {S_P}} \left({S_P}\right)$ is a
$\widehat{S_P}$-module and is isomorphic to
$\HH_{P \widehat{R_P}}^{\dim  \widehat{S_P}} \left(\widehat{S_P}\right)$
as $\widehat{S_P}$-modules, Theorem \ref{Theorem: characterization of parameter test ideal} implies that it is enough
to show that the completion at $P$ of
$$\left(0 :_{R} \frac{R^\alpha}{(\Image A+ c R^\alpha)^{\star U}} \right)$$
is a parameter test-ideal of $\widehat{S_P}$.

With the notation as in section \ref{Section: Prime characteristic tools}
$$\Delta^1\left( \HH_{P \widehat{R_P}}^{\dim  \widehat{S_P}} \left(S_P\right)\right)$$
is isomorphic to $\Coker \widehat{A} \xrightarrow[]{\widehat{U}} \Coker \widehat{A}^{[p]}$,
the completion at $P$ of $\Coker A \xrightarrow[]{U} \Coker A^{[p]}$.

Let $V\subseteq \HH_{P \widehat{R_P}}^{\dim  \widehat{S_P}} \left(S_P\right)$ be any $\widehat{R_P}[\Theta; f]$-submodule killed by
$c$. An application of $\Delta^1\left(-\right)$ to this inclusion of $\widehat{R_P}[\Theta; f]$-modules yields a commutative diagram with exact rows
\begin{equation}\label{CD3}
\xymatrix{
\Coker \widehat{A} \ar@{>}[r] \ar@{>}[d]^{U} & \Coker B \ar@{>}[r] \ar@{>}[d]^{U} &  0 \\
\Coker \widehat{A}^{[p]} \ar@{>}[r]& \Coker B^{[p]}\ar@{>}[r] & 0  \\
}.
\end{equation}
where $B$ is a $\alpha \times \gamma$ matrix with entries in $\widehat{R_P}$ with the following three properties
\begin{enumerate}
\item[(a)] $\Image B \supseteq \Image \widehat{A}$,
\item[(b)] $\Image U B \subseteq \Image B^{[p]}$, and
\item[(c)] $c R_P^\alpha \subseteq \Image B$.
\end{enumerate}
If $V=\left(\Coker B\right)^\vee$ is the \emph{largest} $\widehat{R_P}[\Theta; f]$-submodule of
$\HH_{P \widehat{R_P}}^{\dim  \widehat{S_P}} \left(S_P\right)$
killed by $c$, then the matrix $B$ has the \emph{smallest} image among those matrices satifying (a), (b) and (c) above.
Such a matrix is given by one whose image is $\left( \Image \widehat{A} + c \widehat{R_P}^\alpha \right)^{\star U}$.
We apply Theorem \ref{Theorem: characterization of parameter test ideal} and
deduce that the parameter test ideal at the completion is given by
$$\left(0 :_{\widehat{R_P}} \frac{\widehat{R_P}^\alpha}{(\widehat{A}+ c \widehat{R_P}^\alpha)^{\star U}} \right) .$$

We established in Lemma \ref{Lemma: star commutes with completion} that
the ${(-)}^\star$ operation commutes with localization and completion, and the additional use of the flatness of completion
implies that  parameter test ideal of  $S_P$ (resp.~$\widehat{S_P}$)
is given by the localization (resp.~completion) of
$$\left(0 :_{R} \frac{R^\alpha}{(A+ c R^\alpha)^{\star U}} \right)$$
at $P$.
\end{proof}

The rest of this section extends Theorem \ref{Theorem: global pti in CM rings} to a wider class of rings, and we start with
the following.

\begin{defi}
A local ring $(S,\mathfrak{m})$ is called a \emph{generalized Cohen-Macaulay} if $\HH^i_\mathfrak{m}(S)$ has finite length for all $i<\dim S$.
\end{defi}

Given any local ring $(S,\mathfrak{m})$ we denote
\[
\tau^S=\bigcap_{{\rm parameter\ ideals\ }I}(I:I^*)
\]
the {\it parameter test ideal of $S$};
\cite[Proposition 2.5]{SmithFRatImpliesRat} shows that
$\tau^S\subseteq (0 :_S 0^*_{H^{\dim S}_{\mathfrak{m}}(S)})$.

\begin{defi}
\label{definition: finding colon-killers}
Let $(S,\fm)$ be a Noetherian local ring. An element $c\in S$ is called a colon-killer if
$c((x_1,\dots,x_i):x_{i+1})\subseteq (x_1,\dots,x_i)$ for all parameters $x_1,\dots,x_{i+1}$ in $S$.
It is straightforward to check that the set of colon-killers forms an ideal which we will denote with $\fc$.
\end{defi}

\begin{lem}
\label{lemma: J and tau}
Let $(S,\fm)$ be a Noetherian commutative local ring of characteristic $p$ and let $\fc$ be as above. We have
\[\fc^d (0 :_S 0^*_{H^d_{\fm}(S)})\subseteq \tau^S.\]
\end{lem}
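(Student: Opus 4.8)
The plan is to show that if $u \in (0 :_S 0^*_{H^d_\fm(S)})$ and $v_1, \dots, v_d \in \fc$, then the product $u v_1 \cdots v_d$ lies in $\tau^S = \bigcap_I (I : I^*)$, where the intersection runs over parameter ideals $I$. Fix a parameter ideal $I$; by the standard reduction, it suffices to treat $I = (x_1, \dots, x_d)$ a full system of parameters (if $I$ is generated by $j < d$ parameters, extend to a full system and note $(I : I^*)$ only gets larger when we pass to the full system, or rather handle the general $I$ by the usual limit argument once the system-of-parameters case is done). So the goal is $u v_1 \cdots v_d \cdot (x_1, \dots, x_d)^* \subseteq (x_1, \dots, x_d)$.

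The first key step is to relate tight closure of the parameter ideal to the tight closure of $0$ in $H = H^d_\fm(S)$. Recall $H = \varinjlim_t S/(x_1^t, \dots, x_d^t)$ with transition maps multiplication by $x = x_1 \cdots x_d$, so there is a canonical map $S/(x_1,\dots,x_d) \to H$, $s + I \mapsto [s + I]$. The colon-killer hypothesis enters precisely here: when $S$ is not Cohen--Macaulay the transition maps need not be injective, but for $c \in \fc$ we have $c \cdot \big((x_1^t,\dots,x_d^t) :_S x^s\big) \subseteq (x_1^t, \dots, x_d^t)$ for all $s, t$ (iterating the defining property of $\fc$ across the at most $d$ coordinates, each iteration costing one factor of $\fc$). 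Consequently, if $s \in S$ maps to $0$ in $H$, then $v_1 \cdots v_d \cdot s \in (x_1, \dots, x_d)$ — this is the mechanism by which the $\fc^d$ factor repairs the failure of injectivity. More generally one checks that $v_1 \cdots v_d$ carries the preimage in $S/(x_1,\dots,x_d)$ of $0^*_H$ into $(x_1,\dots,x_d)^*/(x_1,\dots,x_d)$... wait, the cleaner route: show that for $s \in (x_1,\dots,x_d)^*$, the class $[s + I] \in H$ lies in $0^*_H$ (a test-element computation: $c s^{p^e} \in (x_1^{p^e},\dots,x_d^{p^e})$ for all $e$ says exactly that $c \cdot T^e[s+I] = 0$ in $H$, forcing $[s+I] \in 0^*_H$ since any fixed test element works).

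The second key step then combines the two inputs. Given $s \in (x_1,\dots,x_d)^*$, we have $[s+I] \in 0^*_H$, hence $u \cdot [s+I] = 0$ in $H$ by the hypothesis $u \in (0 :_S 0^*_H)$. So $u s$ maps to $0$ in $H = \varinjlim_t S/(x_1^t,\dots,x_d^t)$, which means $x^s u s \in (x_1^{s+1},\dots,x_d^{s+1})$ for some $s \gg 0$; equivalently $u s \in \big((x_1^{t},\dots,x_d^{t}) :_S x^{t-1}\big)$ after shifting indices appropriately. Now apply the colon-killer property $d$ times: $v_1 \cdots v_d \cdot u s \in (x_1, \dots, x_d)$. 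Therefore $u v_1 \cdots v_d \in (I : I^*)$ for this $I$, and since $I$ was an arbitrary parameter ideal, $\fc^d (0 :_S 0^*_H) \subseteq \tau^S$.

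The main obstacle I anticipate is the bookkeeping in the colon-killer iteration: passing from "$u s$ dies in the direct limit" to an honest containment in $(x_1,\dots,x_d)$ requires repeatedly peeling off one variable at a time, at each stage using $c \cdot \big((x_1,\dots,x_i) :_S x_{i+1}\big) \subseteq (x_1,\dots,x_i)$ for a suitable system of parameters (here $x_i$ replaced by powers, which are still parameters, so $\fc$ still applies). One must be careful that after clearing denominators the element $x^N u s$ lies in a power $(x_1^M,\dots,x_d^M)$ and then reduce the exponent $M$ down to $1$ in $d$ steps, each consuming one factor from $\fc^d$; tracking that the resulting colons are genuinely of the form $(\text{parameters}) :_S (\text{parameter})$ is the delicate point, but it is exactly what Definition \ref{definition: finding colon-killers} is built to handle.
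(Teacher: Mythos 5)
Your proposal is correct and follows essentially the same route as the paper's proof: cite the identification (Smith's Proposition~3.3) of $(x_1,\dots,x_d)^*$ with the preimage of $0^*_{H^d_\fm(S)}$ under $S/(x_1,\dots,x_d)\to H$, use $u\in(0:_S 0^*_H)$ to kill $[us]$ in the direct limit, and then peel the colon $(x_1^{N+1},\dots,x_d^{N+1}):(x_1\cdots x_d)^N$ down to $(x_1,\dots,x_d)$ one variable at a time, spending one colon-killer per variable. One small remark: your parenthetical ``since any fixed test element works'' is unnecessary and slightly misleading --- the computation $c z^{p^e}\in(x_1^{p^e},\dots,x_d^{p^e})$ for $e\gg 0$ with $c$ not in any minimal prime (as supplied by the definition of $z\in(x_1,\dots,x_d)^*$) already gives $[z]\in 0^*_H$ directly, no test element needed; this matters because the lemma does not assume $S$ has a test element.
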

\begin{proof}
Let $c_1,\dots,c_d$ be elements of $\fc$. It suffices to show that $c_1\cdots c_d\Ann_S(0^*_{H^d_{\fm}(S)})\subseteq \tau^S$.
Let $x_1,\dots,x_d$ be a system of parameters and $z$ be an element of $S$.
Then according to \cite[Proposition 3.3]{SmithTightClosureParameter} one has $z\in (x_1,\dots,x_d)^*$ if and only if
$[\frac{z}{x_1\cdots x_d}]\in 0^*_{H^d_{\fm}(S)}$.
Assume that $\delta\in \Ann_S (0^*_{H^d_{\fm}(S)})$ and
hence $\delta[\frac{z}{x_1\cdots x_d}]=0$.
This is equivalent to the existence of an integer $n$ such that
\[\delta z\in (x_1^{n+1},\dots,x^{n+1}_d):(x_1\cdots x_d)^n\]
Write $\delta z(x_1\cdots x_d)^n=\sum_ia_ix^{n+1}_i$;
we have $x^n_1(\delta z(x_2\dots x_d)^n-a_1x_1)\in (x^{n+1}_2,\dots,x^{n+1}_d)$
and  $c_1(\delta z(x_2\dots x_d)^n-a_1x_1)\in (x^{n+1}_2,\dots,x^{n+1}_d)$,
since $c_1$ is a colon-killer.
Write $c_1\delta z(x_2\dots x_d)^n=ca_1x_1+\sum_ib_ix^{n+1}_i$.
Then $x^n_2(c_1\delta z(x_3\cdots x_d)^n-b_2x_2) \in (x_1,x^{n+1}_3,\dots, x^{n+1}_d)$.
So,
\[c_2\Big(c_1\delta z(x_3\cdots x_d)^n-b_2x_2\Big) \in (x_1,x^{n+1}_3,\dots, x^{n+1}_d),
\]
i.~e.,
\[
c_1c_2\delta z (x_3\cdots x_d)^n \in (x_1,x_2,x^{n+1}_3,\dots, x^{n+1}_d).
\]
Continuing this process, we obtain $c_1\cdots c_d\delta z\in (x_1,\dots,x_d)$, i.~e.,
$c_1\cdots c_d\delta \in (x_1,\dots,x_d):(x_1,\dots,x_d)^*$.
\end{proof}

\begin{lem}
\label{lemma: ann of lc localizes}
Let $(S,\fm)$ be a Noetherian commutative local ring of characteristic $p$ that satisfies Serre's condition $S_2$. Then
\[\Ann_{R_P}(0^*_{H^h_{PR_P}(R_P)})\subseteq \Ann_R(0^*_{H^d_{\fm}(R)})_P\]
for each prime ideal $P$ of $R$, where $h=\height(P)$.
\end{lem}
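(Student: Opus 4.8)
The plan is to compare the tight-closure-of-zero in the top local cohomology of $R$ with the corresponding object over $R_P$, using the fact that $S_2$ gives us control over the associated primes of the canonical module and hence over the behavior of $\HH^d_\fm(R)$ away from $\fm$. First I would note (as a matter of notation) that although the statement is phrased with $R$, this $R$ plays the role of $S$ in the earlier discussion, i.e.~it is the quotient ring; with $d=\dim R$ and $h=\height P$, the point is that for a prime $P$ the localization $R_P$ has dimension $h$ and, under $S_2$, is again a ring whose top local cohomology is well-behaved. The key structural input is that $0^*_{H^d_\fm(R)}$ is the \emph{largest proper} $R[\Theta;f]$-submodule of $H^d_\fm(R)$ stable under the natural Frobenius (this is the standard Smith description, and it is exactly the kind of object that Theorem \ref{Theorem: characterization of parameter test ideal} and the $\Delta^e$/$\Psi^e$ machinery of Section \ref{Section: Prime characteristic tools} are designed to track).

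The main steps, in order, would be: (1) Dualize. Apply Local Duality / the functor $\Delta^e$ to replace $H^d_\fm(R)$ by a map $\Coker A \xrightarrow{U} \Coker A^{[p]}$ of Noetherian $R$-modules as in Section \ref{Section: The global parameter test ideal}, so that $0^*_{H^d_\fm(R)}$ becomes, under Matlis duality, a \emph{quotient} of $\Coker A$, and its annihilator becomes $\left(0:_R \Coker A/(\Image A)^{\star U}\right)$-type expression — concretely, $\Ann_R(0^*_{H^d_\fm(R)}) = \left(0:_R R^\alpha/(\Image A)^{\star U}\right)$. (2) Localize. Use Lemma \ref{Lemma: Ie and localization} and Lemma \ref{Lemma: star commutes with completion} (together with the fact that $\Delta^e$ and Local Duality commute with localization at $P$, which holds because $R$ is regular and $F^e_* R$ is intersection flat) to see that the $\star$-closure and the ideal quotient both localize: $\left(0:_R R^\alpha/(\Image A)^{\star U}\right)_P \supseteq \left(0:_{R_P} R_P^\alpha/(\Image A_P)^{\star U_P}\right)$, with the localized expression computing $\Ann_{R_P}(0^*_{H^h_{PR_P}(R_P)})$ \emph{provided} the localized $\Delta^1$ still represents $\HH^h_{PR_P}(R_P)$ with its natural Frobenius. (3) Use $S_2$. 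The inclusion — rather than equality — is precisely where $S_2$ enters: $S_2$ guarantees that $\dim R_P = h = \height P$ genuinely and that no "lower-dimensional pieces" of $\Ext^{d-\cdot}$ interfere, so that the localization of the $\Delta^1$-picture for the top cohomology of $R$ at $P$ still captures the top cohomology of $R_P$ (localization can only kill generators, hence can only enlarge the $\star$-closure's image and thus shrink or preserve the annihilator, giving the asserted containment).

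The hard part will be Step (3): justifying that localization at $P$ of the Matlis-dual picture for $H^d_\fm(R)$ actually computes the analogous object for $H^h_{PR_P}(R_P)$ — i.e.~that $\Ext^{d-d}_R(R/\mathfrak{q},R)_P$ and $\Ext^{h-h}_{R_P}(\cdots)$ match up — and that this comparison is compatible with the natural Frobenius actions on both sides. One expects a subtlety because $d - d = 0$ but $\dim R_P - \dim R_P = 0$ only after knowing $\height P = \dim R - \dim(R/P)$-type equidimensionality, which $S_2$ (via unmixedness of the canonical module) is exactly what supplies; the containment (rather than equality) in the statement is the honest reflection of the fact that without catenary/equidimensionality hypotheses one only gets one direction. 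I would handle this by invoking $S_2 \Rightarrow$ unmixedness of $R$ and of $R_P$, reducing to the observation that $0^*$ of the top local cohomology is detected by a colon-capturing/annihilator condition (as in \cite[Proposition 3.3]{SmithTightClosureParameter} used in Lemma \ref{lemma: J and tau}) which visibly passes to localizations, the passage producing a containment because an element that kills $0^*_{H^d_\fm(R)}$, after localizing, still kills every element of $H^h_{PR_P}(R_P)$ that comes from a $\Theta$-stable submodule killed by a test element.
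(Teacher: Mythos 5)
The paper's proof of this lemma is a short citation: it observes that the inclusion is essentially proved on page 3468 of Smith's {\it Test Ideals in Local Rings}, and that the key ingredient there, \cite[Lemma 2.1]{SmithTestIdeals}, remains valid under the $S_2$ hypothesis. Your proposal takes an entirely different route, through the $\Delta^e$/$\Psi^e$ and $\star$-closure formalism, and it has both a scope problem and a logical gap.

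On scope: the dualization machinery you invoke in Steps (1) and (2) is set up in Notation~\ref{Notation: complete regular local} and Section~\ref{Section: Further prime characteristic tools} for a {\it complete regular local} ring $R$ (or, for the $\star$-closure, a regular $T$ with $F_*^e T$ intersection flat), and for quotients $S=R/I$ thereof. The present lemma is stated for an arbitrary Noetherian local ring of characteristic $p$ satisfying $S_2$; there is no completeness, no regularity, and no ambient regular ring. To even begin the argument you sketch, you would have to pass to a presentation of the completion via Cohen's structure theorem, verify that the annihilator of $0^*$ of the top local cohomology commutes with completion, and then track what $S_2$ and height mean after completion. None of this is addressed, and it is not a formality: $\Ann_R(0^*_{H^d_\fm(R)})$ and its localization at $P$ live in $R$ and $R_P$, not in any completed ambient regular ring.

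On the logical gap: the step you flag as hard, Step (3), is indeed where the proof has to happen, and the sketch you give for it is both vague and, in its final sentence, oriented the wrong way. You write that ``an element that kills $0^*_{H^d_\fm(R)}$, after localizing, still kills every element of $H^h_{PR_P}(R_P)$ that comes from a $\Theta$-stable submodule killed by a test element.'' That argues $\Ann_R(0^*_{H^d_\fm(R)})_P \subseteq \Ann_{R_P}(0^*_{H^h_{PR_P}(R_P)})$, which is the {\it reverse} of the claimed containment. The lemma says that an element of $R_P$ annihilating $0^*$ of $H^h_{PR_P}(R_P)$ lies in the localized annihilator $\Ann_R(0^*_{H^d_\fm(R)})_P$, and establishing this requires relating a colon condition on parameter ideals of $R_P$ back to a colon condition on full systems of parameters for $R$ — which is exactly what Smith's Lemma 2.1 (the $S_2$-dependent step) accomplishes, and which your sketch never engages.

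In short: the overall approach is not the paper's, it imports machinery whose hypotheses are not available here, and the concrete use of $S_2$ — the whole content of the lemma — is missing, with the final heuristic pointing in the wrong direction.
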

\begin{proof}
This is essentially proved on page 3468 in \cite{SmithTestIdeals};
The key ingredient of the proof there is \cite[Lemma 2.1]{SmithTestIdeals}
which holds in any Noetherian ring satisfying the $S_2$ condition.
\end{proof}

\begin{thm}
\label{theorem:  pti for genCM}
Let $(S,\fm)$ be a $d$-dimensional generalized Cohen-Macaulay local ring, and write $\tau=\tau^S$.
Then $\tau_P$ is the parameter test ideal of $S_P$ for each prime ideal $P$ of $S$.
\end{thm}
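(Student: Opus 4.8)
The plan is to reduce the generalized Cohen–Macaulay case to the Cohen–Macaulay case handled in Theorem~\ref{Theorem: global pti in CM rings}, using the fact that a generalized Cohen–Macaulay local ring is Cohen–Macaulay on the punctured spectrum. First I would observe that a generalized Cohen–Macaulay ring satisfies Serre's condition $S_2$ (indeed it is unmixed and equidimensional, and the lower local cohomologies having finite length forces the $S_2$ condition away from $\fm$, which is enough), so that Lemma~\ref{lemma: ann of lc localizes} applies. Next, for a prime $P\neq \fm$, the localization $S_P$ is Cohen–Macaulay of dimension $h=\height P$; by \cite[Proposition 4.4]{SmithTestIdeals} its parameter test ideal is $\tau^{S_P}=(0:_{S_P} 0^*_{H^h_{PS_P}(S_P)})$, and one must show this equals $(\tau^S)_P$.

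The containment $(\tau^S)_P\subseteq \tau^{S_P}$ is the easy direction: a parameter ideal $I$ of $S_P$ is, after clearing denominators, the extension of an ideal generated by parameters in $S$ (a partial system of parameters can be extended to a full one), so any element of $\tau^S$ lands in $(I:_{S_P}I^*)$ after localizing, using that tight closure commutes with localization for ideals generated by parameters in this setting, or more robustly that $\tau^S\subseteq(0:_S 0^*_{H^d_\fm(S)})$ and tracking this through the localization map on local cohomology. For the reverse containment I would combine the two lemmas just developed: Lemma~\ref{lemma: ann of lc localizes} gives
\[
\Ann_{S_P}\!\big(0^*_{H^h_{PS_P}(S_P)}\big)\subseteq \Ann_S\!\big(0^*_{H^d_\fm(S)}\big)_P,
\]
and Lemma~\ref{lemma: J and tau} gives $\fc^d\,\Ann_S(0^*_{H^d_\fm(S)})\subseteq\tau^S$. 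Since $S$ is generalized Cohen–Macaulay, the colon-ideal $\fc$ is $\fm$-primary (the finite-length local cohomology modules are annihilated by a power of $\fm$, and such a power is a colon-killer), so $\fc$ becomes the unit ideal after localizing at any $P\neq\fm$; hence $(\tau^S)_P\supseteq \fc^d_P\,\Ann_S(0^*_{H^d_\fm(S)})_P=\Ann_S(0^*_{H^d_\fm(S)})_P\supseteq \Ann_{S_P}(0^*_{H^h_{PS_P}(S_P)})=\tau^{S_P}$. Combining the two inclusions gives $(\tau^S)_P=\tau^{S_P}$ for $P\neq\fm$, and the case $P=\fm$ is trivial.

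The main obstacle I anticipate is not any single deep step but rather pinning down precisely why $\fc$ is $\fm$-primary in the generalized Cohen–Macaulay case and verifying the $S_2$ hypothesis needed to invoke Lemma~\ref{lemma: ann of lc localizes}; the definition of colon-killer quantifies over \emph{all} systems of parameters simultaneously, so one needs the standard fact that for a generalized Cohen–Macaulay ring there is a uniform power $\fm^k$ annihilating $\bigoplus_{i<d}H^i_\fm(S)$ and that $\fm^k$ then kills all the relevant colons $((x_1,\dots,x_i):x_{i+1})/(x_1,\dots,x_i)$ — this is essentially the characterization of generalized Cohen–Macaulay rings via parameter ideals (cf.\ Trung, Schenzel, St\"uckrad–Vogel), which I would cite rather than reprove. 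The other point requiring care is the easy-direction localization statement: one should phrase it via the annihilator of $0^*_{H^d_\fm(S)}$ and the functoriality of local cohomology under $S\to S_P$, rather than attempting to localize tight closure of parameter ideals directly, since the latter is more delicate.
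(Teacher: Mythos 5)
Your proposal tracks the paper's own proof almost step for step: both directions are the same (the easy inclusion $\tau_P\subseteq\tau^{S_P}$ via commutation of tight closure with localization for parameter ideals à la Aberbach--Hochster--Huneke, and the reverse inclusion via $\fc^d\Ann_S(0^*_{H^d_\fm(S)})\subseteq\tau$ from Lemma~\ref{lemma: J and tau}, the $\fm$-primariness of $\fc$, and Lemma~\ref{lemma: ann of lc localizes}). Where the paper invokes \cite[Corollary 8.1.3]{BrunsHerzog} to show $J=\prod_{i<d}\Ann H^i_\fm(S)$ lies in $\fc$, you appeal directly to the standard characterization of generalized Cohen--Macaulay rings; these are the same fact.

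The one place where your reasoning contains an outright error is the parenthetical claim that a generalized Cohen--Macaulay local ring satisfies Serre's condition $S_2$ (and, in passing, is unmixed). Neither is true under the definition the paper uses. For instance $S=k[[x,y,z,w]]/\bigl((x,y)\cap(z,w)\bigr)$ has $\dim S=2$, $H^0_\fm(S)=0$ and $H^1_\fm(S)\cong k$ --- so it is generalized Cohen--Macaulay (even Buchsbaum) --- but $\operatorname{depth} S=1<2$, so $S_2$ fails at $\fm$; and $k[[x,y]]/(x^2,xy)$ is generalized Cohen--Macaulay of dimension $1$ with an embedded prime, so unmixedness also fails in general. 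Your fallback remark, that $S_2$ on the punctured spectrum (which does follow, since $S$ is Cohen--Macaulay away from $\fm$) ``is enough'' for Lemma~\ref{lemma: ann of lc localizes}, may well be what is actually needed, but it does not match the hypothesis under which that lemma is stated, so as written there is a gap between what you have and what you invoke. To be fair, the paper's own proof applies Lemma~\ref{lemma: ann of lc localizes} to a generalized Cohen--Macaulay ring without addressing the $S_2$ hypothesis either, so this is an issue you share with the source rather than one you introduced; but you should not paper over it with an incorrect assertion --- either verify that Smith's \cite[Lemma 2.1]{SmithTestIdeals} only needs $S_2$ off the closed point, or add an explicit unmixedness/$S_2$ hypothesis to the theorem.
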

\begin{proof}
If $S$ is Cohen-Macaulay, the result follows from Theorem \ref{Theorem: global pti in CM rings}
so we assume that $S$ is not Cohen-Macaulay.

First, we prove that $\tau_P\subseteq \tau^{S_P}$.

Let $x$ be an element of $\tau$. We may assume that $P\neq \fm$; by definition of $\tau$, it is the parameter test ideal of $S$.
Set $h=\height(P)$.
Let $x_1,\dots,x_d$ be a full system of parameters such that $\frac{x_1}{1},\dots,\frac{x_h}{1}$ is a system of parameters for $S_P$.
Since $S$ is generalized Cohen-Macaulay, $\frac{x_1}{1},\dots,\frac{x_h}{1}$ is a regular sequence;
consequently it suffices to show that
\[\frac{x}{1}(\frac{x^t_1}{1},\dots,\frac{x^t_h}{1})^*\subseteq (\frac{x^t_1}{1},\dots,\frac{x^t_h}{1})\]
for all $t\geq 1$.
It is proved in \cite{AberbachHochsterHunekeFinitePhantomProjDim} that tight closure commutes with localization for ideals parameters.
Thus, if $\frac{z}{1}\in (\frac{x^t_1}{1},\dots,\frac{x^t_h}{1})^*$, then there is an element $u\in R\backslash P$ such that
$uz\in (x^t_1,\dots, x^t_d)^*$.
Then $xuc\in (x^t_1,\dots, x^t_d)$ and hence $\frac{x}{1}\frac{z}{1}\in (\frac{x^t_1}{1},\dots,\frac{x^t_h}{1})$.
This finishes the proof of $\tau_P\subseteq \tau^{S_P}$.

To prove the reverse inclusion, we write
$J=\prod_{i=0}^{\dim S-1}\Ann(H^i_{\fm}(S))$ and apply
\cite[Corollary 8.1.3]{BrunsHerzog}, to deduce that $J\subseteq \fc$,
and by Lemma \ref{lemma: J and tau}, we have
\[J^d\Ann_S(0^*_{H^d_{\fm}(S)})\subseteq \tau.\]
Therefore, we have $\tau_P= \Ann_S(0^*_{H^d_{\fm}(S)})_P$ for each prime ideal $P\neq \fm$, since $J^d$ is $\fm$-primary.
Now Lemma \ref{lemma: ann of lc localizes} implies that
\[
\tau^{S_P}=\Ann_{S_P}(0^*_{H^h_{PS_P}(S_P)})\subseteq \Ann_S(0^*_{H^d_{\fm}(S)})_P=\tau_P,
\]
where the first equality follows from our assumption that $S_P$ is Cohen-Macaulay.
\end{proof}

Next we  extend Theorem \ref{theorem:  pti for genCM} to non-local cases.

\begin{thm}\label{Theorem: pti in isolated non-CM points}
Let $c\in R$ be as in Theorem \ref{Theorem: existence of test elements}.
Write $h=\dim(R)-\dim(S)$ and
$Z=\Ann\Big(\frac{R^{\alpha}}{(A+cR^{\alpha})^{\star}}\Big)$ as in the previous paragraph.
Assume that $S$ has isolated non-Cohen-Macaulay points $\fm_1,\dots,\fm_t$,
and that each of these is an isolated non-$F$-rational point.
Write $\tau_i=\tau^{S_{\fm_i}} \cap S$ for $1\leq i\leq t$ and $\tau:=Z\cap \tau_1\cap \cdots \cap \tau_t$.
Then $\tau$ is the global parameter test ideal of $S$ in the sense that
$\tau_P$ is the parameter test ideal of $R_P$ for each prime ideal $P$.
\end{thm}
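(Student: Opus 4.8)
The plan is to compute the localization of $\tau=Z\cap\tau_1\cap\cdots\cap\tau_t$ at an arbitrary prime $P$ of $S$ and to match it with the parameter test ideal $\tau^{S_P}$, splitting into the case $P\in\{\fm_1,\dots,\fm_t\}$ and its complement. First I would collect some preliminary observations. Since $\fm_i$ is an isolated non-Cohen--Macaulay point, $S_{\fm_i}$ is Cohen--Macaulay on its punctured spectrum and hence generalized Cohen--Macaulay; and since $\fm_i$ is moreover an isolated non-$F$-rational point, every prime $Q\subsetneq\fm_i$ lies in a neighbourhood of $\fm_i$ on which $S$ is $F$-rational except at $\fm_i$, so $S_Q$ is $F$-rational. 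Applying Theorem~\ref{theorem:  pti for genCM} to $S_{\fm_i}$, the ideal $\tau^{S_{\fm_i}}$ localizes to the parameter test ideal of $(S_{\fm_i})_Q=S_Q$, which is the unit ideal for $Q\subsetneq\fm_i$; as $\tau^{S_{\fm_i}}$ is nonzero (it contains the image of $c$) and proper ($S_{\fm_i}$ not being $F$-rational), it must be $\fm_iS_{\fm_i}$-primary, whence $\tau_i=\tau^{S_{\fm_i}}\cap S$ is $\fm_i$-primary and $(\tau_i)_{\fm_i}=\tau^{S_{\fm_i}}$. Also, localization commutes with the finite intersection defining $\tau$.

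If $P\notin\{\fm_1,\dots,\fm_t\}$, then $S_P$ is Cohen--Macaulay (the $\fm_j$ being the only non-Cohen--Macaulay points of $S$), and $(\tau_j)_P=S_P$ for each $j$ because $\tau_j$ is $\fm_j$-primary with $\fm_j\neq P$. Hence $\tau_P=Z_P$, which by Theorem~\ref{Theorem: global pti in CM rings} is the parameter test ideal of $S_P$.

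If $P=\fm_i$, then $(\tau_j)_{\fm_i}=S_{\fm_i}$ for $j\neq i$ while $(\tau_i)_{\fm_i}=\tau^{S_{\fm_i}}$, so $\tau_{\fm_i}=Z_{\fm_i}\cap\tau^{S_{\fm_i}}$, and it remains to show $\tau^{S_{\fm_i}}\subseteq Z_{\fm_i}$. Passing to the completion and using Lemmas~\ref{Lemma: Ie and localization} and~\ref{Lemma: star commutes with completion}, the part of the proof of Theorem~\ref{Theorem: global pti in CM rings} that does not rely on Cohen--Macaulayness identifies $\widehat{Z_{\fm_i}}$ with the annihilator of the largest $\widehat{R_{\fm_i}}[\Theta;f]$-submodule of $H:=H^{\dim S}_{\fm_i}(\widehat{S_{\fm_i}})$ killed by $c$, that is, of $L=\bigcap_{e\geq0}\Ann_H(cT^e)$. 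It then suffices to prove $\tau^{S_{\fm_i}}L=0$. Given $\delta\in\tau^{S_{\fm_i}}$ and $\eta=[s+(x_1^t,\dots,x_d^t)]\in L$ with $x_1,\dots,x_d$ a system of parameters of $S_{\fm_i}$, the vanishing $cT^e\eta=0$ means $cs^{p^e}(x_1\cdots x_d)^{n_e}\in(x_1^{tp^e+n_e},\dots,x_d^{tp^e+n_e})$ for some $n_e$; a colon-killing computation as in Lemma~\ref{lemma: J and tau} --- legitimate because the image of $c$ in $\widehat{S_{\fm_i}}$, being a parameter test element, is a colon-killer --- removes the $n_e$ and gives $c^{d+1}s^{p^e}\in(x_1^{tp^e},\dots,x_d^{tp^e})$ for every $e\geq0$, so $s$ lies in the tight closure of $(x_1^t,\dots,x_d^t)\widehat{S_{\fm_i}}$. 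Because tight closure of parameter ideals is compatible with completion ($c$ being completely stable) and $\delta$ annihilates the tight closure of $(x_1^t,\dots,x_d^t)$ modulo that ideal, we get $\delta s\in(x_1^t,\dots,x_d^t)\widehat{S_{\fm_i}}$, i.e. $\delta\eta=0$; thus $\tau^{S_{\fm_i}}L=0$, and by faithful flatness of completion $\tau^{S_{\fm_i}}\subseteq Z_{\fm_i}$, so $\tau_{\fm_i}=\tau^{S_{\fm_i}}$.

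The step I expect to be the main obstacle is this last case: since Theorem~\ref{Theorem: global pti in CM rings} is stated under a Cohen--Macaulay hypothesis, one has to extract from its proof precisely the Cohen--Macaulay-free identification of $\widehat{Z_{\fm_i}}$ with $\Ann_{\widehat{S_{\fm_i}}}(L)$, and then supply the independent argument that $\tau^{S_{\fm_i}}$ kills $L$ --- which is where the generalized Cohen--Macaulay property of $S_{\fm_i}$, the colon-killer machinery of Lemma~\ref{lemma: J and tau}, and the completion-compatibility of tight closure of parameter ideals all play their part.
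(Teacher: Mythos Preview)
Your proposal is correct and follows the same two-case split as the paper: the $\tau_i$ are $\fm_i$-primary, so away from the $\fm_i$ one has $\tau_P=Z_P$ and Theorem~\ref{Theorem: global pti in CM rings} applies, while at $P=\fm_i$ one must show $\tau^{S_{\fm_i}}\subseteq Z_{\fm_i}$.

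The difference lies in how you establish that last containment. You work hard: you identify $\widehat{Z_{\fm_i}}$ with $\Ann_{\widehat{S_{\fm_i}}}(L)$, then take an arbitrary $\eta\in L$, unwind the condition $cT^e\eta=0$ in the direct-limit presentation of $H$, and use a colon-killer computation (with $c$ as colon-killer) to push the representative $s$ into the tight closure of a parameter ideal, finally applying $\delta\in\tau^{S_{\fm_i}}$. The paper instead uses a one-line observation: since $c$ is a (completely stable) test element, the largest Frobenius-stable submodule of $H$ killed by $c$ is precisely $0^*_H$, so $Z_{\fm_i}=\Ann_{S_{\fm_i}}0^*_{H^{\dim S}_{\fm_i}(S)}$; the containment $\tau^{S_{\fm_i}}\subseteq Z_{\fm_i}$ is then immediate from Smith's general inequality $\tau^S\subseteq(0:_S 0^*_{H^{\dim S}_\fm(S)})$, already recorded just before Definition~\ref{definition: finding colon-killers}. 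Your colon-killer argument is in effect a hands-on re-derivation of the inclusion $L\subseteq 0^*_H$, which is unnecessary once one recognizes $L=0^*_H$ directly. Your route also introduces a small extra wrinkle---you must argue that $\delta\in\tau^{S_{\fm_i}}$ still tests in the completion---whereas the paper's route, working with $0^*_H$ for $H=H^{\dim S}_{\fm_i}(S_{\fm_i})\cong H^{\dim S}_{\fm_i}(\widehat{S_{\fm_i}})$, avoids this entirely.
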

\begin{proof}
The fact that  $\fm_1,\dots,\fm_t$ are isolated non-$F$-rational points implies that
$\tau_i$ is $\fm_i$ primary for each $1\leq i\leq t$.
Now for any $\fm\notin\{\fm_1,\dots,\fm_t\}$,
$\tau_{\fm}=Z_{\fm}$ is the parameter-test-ideal of the Cohen-Macaulay localization $S_{\fm}$.
Let $\fm = \fm_i$ for some  $1\leq i\leq t$.

Note that
$$Z_{\fm}=\Ann_{S_{\fm}} 0^*_{{\HH^{\dim S}_{\fm}}(S)}$$
hence ${\tau_i}_{\fm} \subseteq Z_{\fm}$ and
$\tau_{\fm}=Z_{\fm} \cap {\tau_i}_{\fm} =  {\tau_i}_{\fm}$ is
the parameter-test-ideal of the generalized Cohen-Macaulay localization $S_{\fm}$.
\end{proof}

It turns out that the idea in this section, in particular the use of $\Delta^e$ (the Matlis dual that keeps track of Frobenius), can also be applied to study HSL numbers of local cohomology modules, which will occupy our last section.

\section{HSL numbers}
\label{Section: HSL numbers}
In this section we will apply the machinery we have developed to investigate HSL numbers of local cohomology modules, which are defined as follows.

Given any commutative ring $T$ of prime characteristic and $T[\Theta; f]$-module $M$, we declare $m\in M$ to be \emph{nilpotent} under the action of $\Theta$ if $\Theta^e m=0$ for some
$e\geq 0$. The set $\Nil(M)$ of nilpotent elements under $\theta$ forms a $T[\Theta; f]$-submodule of $M$.

\begin{thm}\label{Theorem: HSL}
(cf.~ \cite[Proposition 1.11]{HartshorneSpeiserLocalCohomologyInCharacteristicP}, \cite[Proposition 4.4]{LyubeznikFModulesApplicationsToLocalCohomology})
Let $T$ be a quotient of a complete regular ring, and let $M$ be an Artinian $T[\Theta; f]$-module.
There exists an $\eta\geq 0$ such that $\Theta^\eta \Nil(M)=0$.
\end{thm}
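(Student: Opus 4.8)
The plan is to reduce to the case $M = \Ann_{E^\alpha} A^t$ sitting inside $E^\alpha$ with $\Theta$-action given by $U^t T$, using the Corollary at the end of Section ``Another look at $\mathcal{C}_e$.'' Under the functor $\Delta^1$ (more precisely, passing through the contravariant equivalence between $\mathcal{C}_1$ and $\mathcal{D}_1$), the submodule $\Nil(M)$ corresponds, by Corollary (b) of Section 2, to the data coming from iterating the map $U$ on $\Coker A$: an element of $M$ is killed by $\Theta^e$ precisely when the corresponding class in the dual is killed by the $e$th iterate of the structure map, which on the level of matrices is multiplication by $U^{[p^{e-1}]}\cdots U^{[p]}U$ landing in $\Image A^{[p^e]}$. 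Thus the Matlis dual of $\Nil(M)$ is the quotient of $R^\alpha$ by the ascending union $\bigcup_e W_e$ where $W_e = \{v\in R^\alpha : U^{[p^{e-1}]}\cdots U v \in \Image A^{[p^e]}\}$; equivalently, applying $I_e(-)$ and the $\star$-closure machinery of Section 4, $\Nil(M)^\vee$ is $R^\alpha$ modulo $(\Image A)^{\star U}$.

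The key point is then that this ascending chain of submodules $\{W_e\}_{e\geq 0}$ of the Noetherian module $R^\alpha$ stabilizes: since $R$ is Noetherian, there is some $\eta$ with $W_\eta = W_{\eta+1} = \cdots$, and indeed $W_\eta = (\Image A)^{\star U}$ by the construction in Theorem \ref{Theorem: qth root with respect to U}(b) (the $\star$-closure is reached after finitely many steps of the iteration $V_{i+1} = I_1(UV_i)+V_i$, exactly as recalled in the proof of Lemma \ref{Lemma: star commutes with completion}). Dualizing back via $\Psi^1$, stabilization of the chain $\{W_e\}$ translates precisely into the statement that $\Theta^\eta$ annihilates $\Nil(M)$: once the iteration on the Noetherian side is stable, no new nilpotent elements appear on the Artinian side beyond level $\eta$, and the maximal $\Theta$-stable submodule of $M$ on which $\Theta$ is nilpotent is killed by $\Theta^\eta$. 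The integer $\eta$ is the number of steps needed for the $\star$-closure $(\Image A)^{\star U}$ to stabilize.

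The main obstacle is the bookkeeping in the first paragraph: one must carefully match the Frobenius iteration $\Theta^e$ on $\Ann_{E^\alpha}A^t$ with the correct composite of twisted matrices $U^{[p^{e-1}]}\cdots U^{[p]}U$ on the dual side, and check that being $\Theta^e$-nilpotent in $M$ corresponds exactly to membership of the dual class in $\Image A^{[p^e]}$ after multiplication by that composite — this requires unwinding the definitions of $\Delta^1$ and $\Psi^1$ and the identification $F_R^e(M^\vee) = F_R^e(M)^\vee$ once more, keeping track of which maps are $R$-linear and which are Frobenius-linear. Once that dictionary is in place, the finiteness of $\eta$ is immediate from the Noetherian property of $R^\alpha$ via Theorem \ref{Theorem: qth root with respect to U}. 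Alternatively, and perhaps more cleanly, one can argue directly: $\Nil(M)$ is an Artinian submodule of $M$, hence $\Nil(M)^\vee$ is a Noetherian $R$-module, and $\Theta$ acts on $\Nil(M)$ locally nilpotently; the descending chain $\Nil(M) \supseteq \Theta\Nil(M) \supseteq \Theta^2\Nil(M)\supseteq\cdots$ of Artinian modules, equivalently its dual ascending chain of Noetherian modules, must stabilize, and a stable value that is also the image of a locally nilpotent operator must be zero — giving $\Theta^\eta\Nil(M) = 0$ for the stabilization index $\eta$.
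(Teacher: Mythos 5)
The paper offers no proof of this statement at all---it is quoted from Hartshorne--Speiser and Lyubeznik---and unfortunately both routes in your proposal silently assume the step that gives the theorem its content. In the first route the identification of $\Nil(M)^\vee$ is wrong. The Matlis dual of the submodule $M_e=\ker(\Theta^e)$ is the quotient of $R^\alpha$ by $B_e=\Image A+I_e\bigl(\Image\, U^{[p^{e-1}]}\cdots U^{[p]}U\bigr)$ (this is exactly the computation carried out in the paper's section on HSL numbers), not by your colon module $W_e=\{v\in R^\alpha: U^{[p^{e-1}]}\cdots Uv\in\Image A^{[p^e]}\}$. Your chain $\{W_e\}$ is indeed ascending and stabilizes by Noetherianity, but $R^\alpha/W_e$ is the coimage of $\Delta^e(M)$ and is therefore dual to the \emph{image} $R\Theta^eM$, not to $\ker\Theta^e$; stabilization of the image chain is the easy (DCC) half of a Fitting-type decomposition and does not bound the kernels. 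The correct chain $\{B_e\}$ is \emph{descending} in $R^\alpha$, so Noetherianity gives nothing, and its stabilization is equivalent to the theorem. Relatedly, $(\Image A)^{\star U}=\Image A$, because $U\,\Image A\subseteq\Image A^{[p]}$ holds by hypothesis on $U$, so your formula ``$\Nil(M)^\vee=R^\alpha/(\Image A)^{\star U}$'' would force $\Nil(M)=M$; this already fails for $M=E$ with the natural (injective) Frobenius $T$.

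The second route is the standard reduction, but its punchline---``a stable value that is also the image of a locally nilpotent operator must be zero''---is exactly the assertion one has to prove, and it is not formal. The chain $R\Theta^e\Nil(M)$ does stabilize by DCC at some $L$ with $R\Theta L=L$ and $\Theta$ locally nilpotent on $L$, and one must show $L=0$. This is false without the Artinian hypothesis: take $L=\bigoplus_{i\geq1}(R/\mathfrak m)\,e_i$ with $\Theta e_1=0$ and $\Theta e_i=e_{i-1}$ for $i\geq2$, extended so that $\Theta(am)=a^p\Theta(m)$; then $\Theta$ is locally nilpotent and $R\Theta L=L$, yet $L\neq0$. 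For Artinian $L$ the statement is true, but establishing it is precisely the substance of Hartshorne--Speiser's Proposition 1.11 (proved under additional finiteness hypotheses by a length/socle comparison) and of Lyubeznik's Proposition 4.4 (which removes those hypotheses via the $\Gamma$-construction). So what you have is a correct reduction to the cited results rather than a proof; to make it self-contained you would need to supply that final step, which cannot be extracted from chain conditions alone.
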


\begin{defi}
The HSL number of a $T[\Theta; f]$-module $M$ is $\inf \{ \eta\geq 0 \,|\, \Theta^\eta \Nil(M)=0 \}$.
\end{defi}
Thus, under the hypothesis of Theorem \ref{Theorem: HSL}, HSL numbers are finite.

We will first compute the HSL numbers of local cohomology modules of quotients of complete regular local rings, and later describe the loci of primes $P$ in quotients of
polynomial rings on which the HSL numbers of local cohomology modules of the localization at $P$ are bounded by a given integer.

Let $(R,m)$ be a complete regular ring and let $H$ be an $R[\Theta; f]$-module.
Write $\Delta^1(H)=(\Coker A \xrightarrow{U} \Coker A^{[p]})$ where $A$ is a $\alpha \times \beta$ matrix with entries in $R$, and $U$ is a $\alpha\times \alpha$ matrix with entries in $R$.
Note that $H$ is a $R[\Theta^e; f^e]$-module, and that
$\Delta^e(H)=(\Coker A \xrightarrow{ U^{[p^{e-1}]} \cdots U^{[p]} U} \Coker A^{[p]})$.

Define now the $R[\Theta^e, f^e]$ submodule $H_e=\{ h\in H \,|\, \Theta^e h  = 0 \}$.
An application of $\Delta^e$ to the inclusion of $R[\Theta^e, f^e]$-modules $H_e\subseteq H$ yields
a commutative diagram
\begin{equation}\label{CD5}
\xymatrix{
 \Coker A  \ar@{>}[r]^{}  \ar@{>}[d]^{U^{[p^{e-1}]} \cdots U^{[p]} U } & \Coker B \ar@{>}[d]^{U^{[p^{e-1}]} \cdots U^{[p]} U}   \ar@{>}[r]^{} & 0\\
 \Coker A^{[p^e]}  \ar@{>}[r]^{} & \Coker B^{[p^e]}  \ar@{>}[r]^{} &0 \\
}.
\end{equation}
for some $\alpha \times \gamma$ matrix $B$.
Since the action of $\Theta^e$ is zero on $H_e$, the rightmost vertical map must vanish too.
Furthermore, as
$\Delta^e(H)$ is the largest $R[\Theta^e, f^e]$-submodule of $H$ which is killed by $\Theta^e$,
$B$ is characterized as the matrix with smallest image containing the image of $A$ and making (\ref{CD5}) commute. In other words,
$\Image B=\Image A + I_e (\Image U^{[p^{e-1}]} \cdots U^{[p]} U)$.

Write now $B_e=\Image A + I_e (\Image U^{[p^{e-1}]} \cdots U^{[p]} U)$ and consider the descending chain $\{ B_e \}_{e\geq 0}$ of submodules of $R^\alpha$.
We can now conclude that the HSL number of $H$ is the smallest value of $e\geq 0$ for which $B_e/B_{e+1}=0$.

This calculation also shows that the HSL numbers obtained after localization at a prime $P$ are bounded by the HSL numbers obtained after localizing
at any maximal ideal containing $P$. Thus we obtain the following corollary.

\begin{cor}\label{Corollary: $F$-injectivity localizes}
$F$-injectivity localizes for complete local rings.
\end{cor}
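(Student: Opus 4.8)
The plan is to deduce the corollary from the explicit HSL computation recalled just above, together with the commutation of $I_e$ and the $\star$-closure with localization and completion. Write the given complete local ring as $S=R/I$ with $(R,\fm)$ complete regular local of dimension $d$ (Cohen's structure theorem), and let $P$ be a prime of $S$, regarded as a prime of $R$ containing $I$, with $h=\height P$. It suffices to prove that $\widehat{S_P}=\widehat{R_P}/I\widehat{R_P}$ is $F$-injective: indeed $\HH^j_{PS_P}(S_P)$ and $\HH^j_{P\widehat{R_P}}(\widehat{S_P})$ agree as modules with natural Frobenius, since each $\HH^j_{PS_P}(S_P)$ is Artinian over the local ring $S_P$ and hence already complete. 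Note $\widehat{R_P}$ is again a complete regular local ring, now of dimension $h$.

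The first step is a degree count. Given $j\geq 0$, put $i=j+\dim R/P$; since $R$ is a regular — hence catenary — local domain, $h+\dim R/P=d$, so $d-i=h-j$. As $\Ext^{\bullet}_R(S,R)$ is finitely generated it commutes with localization at $P$ and with completion, so $\Ext^{d-i}_R(S,R)$ passes to $\Ext^{d-i}_{\widehat{R_P}}(\widehat{S_P},\widehat{R_P})$, and Local Duality over $\widehat{R_P}$ identifies the latter with $\HH^{h-(d-i)}_{P\widehat{R_P}}(\widehat{S_P})^\vee=\HH^{j}_{P\widehat{R_P}}(\widehat{S_P})^\vee$. Running this through Proposition \ref{Proposition: Phi of local cohomology} applied over $\widehat{R_P}$ shows that $\Delta^1\!\left(\HH^j_{P\widehat{R_P}}(\widehat{S_P})\right)$ is precisely the completion at $P$ of the localization at $P$ of $\Delta^1\!\left(\HH^i_\fm(S)\right)=(\Coker A\xrightarrow{U}\Coker A^{[p]})$.

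The second step runs the HSL computation in parallel over $R$ and over $\widehat{R_P}$. Over $R$, the HSL number of $\HH^i_\fm(S)$ is the least $e$ with $B_e=B_{e+1}$, where $B_e=\Image A+I_e\!\left(\Image U^{[p^{e-1}]}\cdots U\right)$; over $\widehat{R_P}$ the HSL number of $\HH^j_{P\widehat{R_P}}(\widehat{S_P})$ is computed from the analogous chain built from the localized-and-completed matrices. By Lemma \ref{Lemma: Ie and localization} the operation $I_e$ commutes with localization and completion, as does the formation of images, so this second chain is exactly $\{\widehat{(B_e)_P}\}_{e\geq 0}$; since $B_e=B_{e+1}$ forces $\widehat{(B_e)_P}=\widehat{(B_{e+1})_P}$, the HSL number of $\HH^j_{PS_P}(S_P)$ is at most that of $\HH^{j+\dim R/P}_\fm(S)$. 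As $j$ ranges over $0\leq j\leq\dim S_P\leq h$, the index $i=j+\dim R/P$ stays in $\{0,\dots,d\}$, so every degree that can contribute to $S_P$ is covered — for $j>\dim S_P$ the module vanishes. Hence if $S$ is $F$-injective, so that all $\HH^i_\fm(S)$ have vanishing HSL number, then every $\HH^j_{PS_P}(S_P)$ has vanishing HSL number, i.e.\ $S_P$ is $F$-injective.

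The only real difficulty I anticipate is organisational: keeping the correspondence $j\leftrightarrow i=j+\dim R/P$ straight across the two Local Duality identifications, and justifying the passage to $\widehat{R_P}$ — this is where one uses that the intersection-flatness hypothesis underlying $I_e$ and the $\star$-closure holds for the complete regular ring $\widehat{R_P}$ (but not obviously for $R_P$) and that $\HH^j_{PS_P}(S_P)\cong\HH^j_{P\widehat{R_P}}(\widehat{S_P})$ as $\widehat{R_P}[\Theta;f]$-modules. Everything else follows from Lemma \ref{Lemma: Ie and localization}, Lemma \ref{Lemma: star commutes with completion}, Proposition \ref{Proposition: Phi of local cohomology}, and the HSL computation recalled above.
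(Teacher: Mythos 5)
Your proposal is correct and follows essentially the same route the paper sketches in the paragraph immediately preceding the corollary: the HSL number of $\HH^j_{P\widehat{R_P}}(\widehat{S_P})$ is governed by the chain $\{\widehat{(B_e)_P}\}_{e\geq 0}$, and since $I_e$ commutes with localization and completion (Lemma \ref{Lemma: Ie and localization}), stabilization of $\{B_e\}$ over $R$ forces stabilization at or before the same index after passing to $\widehat{R_P}$, so HSL numbers can only drop. You have simply made explicit the degree bookkeeping $i=j+\dim R/P$ and the passage from $S_P$ to $\widehat{S_P}$ (via the observation that Artinian modules over local rings are already complete), both of which the paper leaves implicit.
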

A special case of Corollary \ref{Corollary: $F$-injectivity localizes}
was proved in \cite[Proposition 4.3]{SchwedeFinjAreDuBois} under the additional hypothesis of $F$-finiteness.

We now turn our attention to the non-local case. Let $R$ be a polynomial ring over a field
of prime characteristic $p$,
$\mathcal{I}\subseteq R$ an ideal, and write $S=R/\mathcal{I}$.
For each prime $P\subseteq R$, we obtain
local cohomology modules $\HH^{j}_{P \widehat{R_P}}(\widehat{S}_P)$,
where $\widehat{{\ }}$ \ denotes completion at the maximal ideal, each equipped with a natural Frobenius map as described in section \ref{Section: Natural Frobenius maps on local cohomology modules}.
For these Artinian $\widehat{R_P}[\Theta; f]$-modules
\begin{equation}\label{Equation: 1}
\Delta^e (\HH^{j}_{P \widehat{R_P}}(S_P)) = \Ext_{R}^{\dim R_P -j}(R/\mathcal{I}, R) \otimes \widehat{R_P} \rightarrow
\Ext_R^{\dim R_P -j}(R/\mathcal{I}^{[p]}, R) \otimes \widehat{R_P}
\end{equation}
and this map is  induced by the surjection
$R/\mathcal{I}^{[p]} \rightarrow R/\mathcal{I}$.

Now fix now presentations $\Ext_{R}^{i}(R/\mathcal{I}, R)=
\Coker \left(R^{\beta_i} \xrightarrow{A_i} R^{\alpha_i} \right)$
and let the maps $\Coker A_i \xrightarrow{U_i} \Coker A_i^{[p]}$ be isomorphic to the map
$$\Ext_{R}^{i}(R/\mathcal{I}, R) \rightarrow F_R^1 \Ext_{R}^{i}(R/\mathcal{I}, R) =
\Ext_{R}^{i}(R/\mathcal{I}^{[p]}, R)$$
induced by the surjection
$R/\mathcal{I}^{[p]} \rightarrow R/\mathcal{I}$.

For any prime $P\subseteq R$ containing $\mathcal{I}$
define $\eta(P,j)$ to be the HSL number of
$\HH^{\height P - j}_{P \widehat{R_P}}(S_P))$ equipped with its natural $\widehat{R_P}[\Theta; f]$-module structure.

\begin{thm}
For all $j\geq 0$ fix a presentation
$R^{\beta_j} \xrightarrow[]{A_j}  R^{\alpha_j} \rightarrow \Ext^j(R/\mathcal{I}, R)$
and an $\alpha_j \times \alpha_j$ matrix $U_j$ for which
$U_j : \Coker A_j \rightarrow \Coker A_j^{[p]}$ is isomorphic to the map
$\Ext_{R}^{j}(R/\mathcal{I}, R)  \rightarrow
\Ext_R^{j}(R/\mathcal{I}^{[p]}, R)$
induced by the surjection
$R/\mathcal{I}^{[p]} \rightarrow R/\mathcal{I}$.
For all $j,e\geq 0$ write $B_{j,e}=\Image A_j + I_e (\Image U_j^{[p^{e-1}]} \cdots U_j^{[p]} U_j)$.
Then $\eta(P,j)<e$ if and only if $P$ is not in the support of $B_{j,e-1}/B_{j,e}$.
\end{thm}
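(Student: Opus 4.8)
The plan is to reduce the statement to the local HSL computation carried out in the paragraphs immediately preceding it, applied to the complete regular local ring $\widehat{R_P}$, and then to transport the submodules occurring in that computation back to $R$ by flat base change. Fix a prime $P\supseteq\mathcal{I}$, write $\widehat{(-)}$ for $(-)\otimes_R\widehat{R_P}$, and put $H=\HH^{\height P-j}_{P\widehat{R_P}}(S_P)$, an Artinian $\widehat{R_P}[\Theta;f]$-module for its natural Frobenius action. Since $R$ is a polynomial ring, $\dim R_P=\height P$, so reading (\ref{Equation: 1}) with cohomological index $\height P-j$ identifies $\Delta^1(H)$ with the base change along $R\to\widehat{R_P}$ of the map $\Ext_R^j(R/\mathcal{I},R)\to\Ext_R^j(R/\mathcal{I}^{[p]},R)$ of the statement, that is, with $\Coker\widehat{A_j}\xrightarrow{\widehat{U_j}}\Coker\widehat{A_j}^{[p]}$.

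Next I would apply the computation preceding the statement to the ring $\widehat{R_P}$ and the module $H$, with the matrices $\widehat{A_j},\widehat{U_j}$ playing the roles of $A,U$ there. Writing $B^P_e:=\Image\widehat{A_j}+I_e(\Image(\widehat{U_j}^{[p^{e-1}]}\cdots\widehat{U_j}^{[p]}\widehat{U_j}))\subseteq\widehat{R_P}^{\alpha_j}$ for the resulting submodules, that computation shows: the $\Theta^e$-torsion submodule $H_e\subseteq H$ is Matlis dual to $\widehat{R_P}^{\alpha_j}/B^P_e$; the $B^P_e$ form a descending chain; and the HSL number $\eta(P,j)$ is the least $e\ge0$ with $B^P_e=B^P_{e+1}$. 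Moreover, since $B^P_e=B^P_{e+1}$ forces $B^P_{e+1}=B^P_{e+2}$ --- the Matlis dual of the elementary fact that $H_e=H_{e+1}$ forces $H_{e+1}=H_{e+2}$, while Theorem \ref{Theorem: HSL} makes the increasing chain $\{H_e\}$ stabilize at $\Nil(H)$ --- this chain is constant for all $e\ge\eta(P,j)$. Consequently, for $e\ge1$ one has $\eta(P,j)\le e-1$ if and only if $B^P_{e-1}=B^P_e$.

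The main point is then to identify $B^P_e$ with $B_{j,e}\otimes_R\widehat{R_P}$. Frobenius powers of a matrix are formed entrywise, hence commute with the homomorphism $R\to\widehat{R_P}$, so $\widehat{U_j}^{[p^k]}=\widehat{U_j^{[p^k]}}$ (and likewise for products of such matrices); flatness of $\widehat{R_P}$ over $R$ makes both the image of a matrix and the sum of two submodules of a finite free module commute with $\widehat{(-)}$; and Lemma \ref{Lemma: Ie and localization}, applied with $\mathcal{T}=\widehat{R_P}$ the completion of $R$ at $P$, gives $I_e(V\otimes_R\widehat{R_P})=I_e(V)\otimes_R\widehat{R_P}$ for every submodule $V\subseteq R^{\alpha_j}$. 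Assembling these three facts yields $B^P_e=B_{j,e}\otimes_R\widehat{R_P}$ for all $e\ge0$. I expect this bookkeeping --- and in particular the appeal to Lemma \ref{Lemma: Ie and localization} for a completion at a prime rather than merely a localization --- to be the only step demanding genuine care.

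Finally I would chain the equivalences. For $e\ge1$: $\eta(P,j)<e$ holds if and only if $\eta(P,j)\le e-1$; by the second paragraph this holds if and only if $B^P_{e-1}=B^P_e$; by the third paragraph together with right exactness of $(-)\otimes_R\widehat{R_P}$, this holds if and only if $(B_{j,e-1}/B_{j,e})\otimes_R\widehat{R_P}=0$; since $\widehat{R_P}$ is faithfully flat over $R_P$ and $(B_{j,e-1}/B_{j,e})_P$ is a finitely generated $R_P$-module, this holds if and only if $(B_{j,e-1}/B_{j,e})_P=0$; and the latter is precisely the condition $P\notin\Supp(B_{j,e-1}/B_{j,e})$. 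This is the asserted equivalence (read, as it must be, for $e\ge1$, the right-hand side not being defined when $e=0$).
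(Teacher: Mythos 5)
Your proof is correct and follows essentially the same path as the paper's: reduce to the local HSL calculation over $\widehat{R_P}$, identify $\Delta^1$ of the local cohomology module with the completion of $\Coker A_j\to\Coker A_j^{[p]}$, commute $I_e$ with completion to show $B^P_e=B_{j,e}\otimes_R\widehat{R_P}$, and finish by faithfully flat descent. If anything your citation of Lemma~\ref{Lemma: Ie and localization} is the more apt reference (the paper invokes Lemma~\ref{Lemma: commutation with localization and completion}, which is really the input to the proof of Lemma~\ref{Lemma: Ie and localization} rather than the statement directly used), and your explicit handling of the $e\ge1$ restriction and of chain stabilization tidies up points the paper leaves implicit.
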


\begin{proof}
Note that $\Delta^1 ( \HH^{\height P - j}_{P \widehat{R_P}}(S_P) )$ is the completion at $P$ of
the map $\Coker A_j \xrightarrow[]{U_j} \Coker A_j^{[p]}$. The discussion above  of the local case now implies that
$\eta(P,j)<e$ if and only if
$$\frac
{\widehat{R_P} \otimes_R \Image A_j + I_{e-1} (\widehat{R_P} \otimes_R \Image U_j^{[p^{e-2}]} \cdots U_j^{[p]} U_j)}
{\widehat{R_P} \otimes_R \Image A_j + I_{e} (\widehat{R_P} \otimes_R \Image U_j^{[p^{e-1}]} \cdots U_j^{[p]} U_j)}
=0 .$$
But Lemma \ref{Lemma: commutation with localization and completion} shows that this quotient is
$$\frac
{\widehat{R_P} \otimes_R \Image A_j +{\widehat{R_P}} \otimes_R I_{e-1} (\Image U_j^{[p^{e-2}]} \cdots U_j^{[p]} U_j)}
{\widehat{R_P} \otimes_R \Image A_j + {\widehat{R_P}} \otimes_R I_e(\Image U_j^{[p^{e-1}]} \cdots U_j^{[p]} U_j)}
=
\widehat{R_P} \otimes_R (B_{e-1}/B_{e}) .$$
\end{proof}

\begin{cor}
The set $\{ \eta(P,j) \,|\, \mathcal{I} \subseteq P\subset R \text{ prime}, j\geq 0\}$ is bounded.
\end{cor}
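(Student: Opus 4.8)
The plan is to read boundedness off the previous theorem, using two finiteness inputs: only finitely many cohomological degrees $j$ actually contribute, and for each such $j$ the descending chain $\{B_{j,e}\}_{e\ge 0}$ of submodules of $R^{\alpha_j}$ is eventually constant.

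First I would dispose of all but finitely many $j$. Since $R$ is a polynomial ring over a field it has finite global dimension $n=\dim R$ and is Cohen--Macaulay, so $\Ext^j_R(R/\mathcal{I},R)=0$ whenever $j<\height\mathcal{I}$ or $j>n$. For such $j$ we may take the presentation $A_j$ with $\alpha_j=\beta_j=0$, whence $B_{j,e}=0$ for all $e$; the theorem then gives $\eta(P,j)=0$ for every prime $P\supseteq\mathcal{I}$ (consistently, $\HH^{\height P-j}_{P\widehat{R_P}}(S_P)$ vanishes in this range). So it suffices to bound $\eta(P,j)$ over the finitely many $j$ with $\height\mathcal{I}\le j\le n$.

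Now fix one such $j$ and put $Z_e=\Supp\bigl(B_{j,e-1}/B_{j,e}\bigr)\subseteq\Spec R$, a closed set. I claim $Z_1\supseteq Z_2\supseteq\cdots$ and $\bigcap_{e\ge 1}Z_e=\varnothing$. For a prime $P\supseteq\mathcal{I}$, faithful flatness of completion shows $P\in Z_e$ iff the localized-and-completed chain is not constant from step $e-1$ to step $e$; by Lemma \ref{Lemma: commutation with localization and completion} (commutation of $I_e$ with completion) that completed chain is exactly the one attached, in the local discussion preceding the theorem, to the Artinian $\widehat{R_P}[\Theta;f]$-module $\HH^{\height P-j}_{P\widehat{R_P}}(S_P)$. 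For that local chain the usual ``kernels stabilise'' argument gives: once two consecutive terms agree, it is constant thereafter; hence $P\notin Z_e\Rightarrow P\notin Z_{e+1}$, which is the monotonicity $Z_e\supseteq Z_{e+1}$. Moreover, by Theorem \ref{Theorem: HSL} applied to each such Artinian module, the local chain is eventually constant, so every $P$ lies outside $Z_e$ for $e\gg 0$, i.e.\ $\bigcap_e Z_e=\varnothing$. Since $\Spec R$ is a Noetherian space, a descending chain of closed sets stabilises; being stationary with empty intersection, it is eventually empty, so there is $N_j$ with $B_{j,e-1}=B_{j,e}$ for all $e\ge N_j$. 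By the theorem, $\eta(P,j)<N_j$ for every prime $P\supseteq\mathcal{I}$.

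Finally I would set $N=\max\{\,N_j \mid \height\mathcal{I}\le j\le n\,\}$; then $\eta(P,j)\le N$ for all primes $P\supseteq\mathcal{I}$ and all $j\ge 0$, which is the assertion. The main obstacle is precisely the stabilisation claimed in the third paragraph: the Hartshorne--Speiser--Lyubeznik finiteness is a priori only pointwise (one $\widehat{R_P}$ at a time), and what promotes it to a statement uniform in $P$ is the monotonicity $Z_e\supseteq Z_{e+1}$ of the support loci—itself a consequence of the way each $B_{j,e+1}$ is built from $B_{j,e}$—combined with the Noetherianity of $\Spec R$.
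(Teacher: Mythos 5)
Your proof is correct and follows essentially the same route as the paper's: deduce boundedness from the openness provided by the preceding theorem via a topological finiteness argument on the Noetherian space $\Spec R$. The paper works with the open sets $U_{j,e} = \Spec R \setminus \Supp(B_{j,e-1}/B_{j,e})$ and invokes quasicompactness; you work instead with the closed supports $Z_e$, observe the chain $\{Z_e\}_e$ is descending with empty intersection (emptiness being exactly the pointwise Hartshorne--Speiser--Lyubeznik finiteness), and invoke Noetherianity of $\Spec R$ to conclude it is eventually empty. These are dual formulations of the same fact. Your version is in one respect more careful than the paper's: you explicitly reduce to the finitely many relevant degrees $\height\mathcal{I}\le j\le\dim R$ via vanishing of $\Ext^j_R(R/\mathcal{I},R)$, a step needed to take a maximum over $j$ at the end, whereas the paper's one-line appeal to a finite subcover of $\Spec R=\bigcup_{j,e}U_{j,e}$ does not by itself yield a bound on $\eta(P,j)$ that is uniform in $j$. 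One small simplification you could make: the monotonicity $Z_e\supseteq Z_{e+1}$ that you derive via stabilisation of the localised chain follows immediately from the theorem, since $P\notin Z_e$ means $\eta(P,j)<e$, which trivially gives $\eta(P,j)<e+1$, i.e.\ $P\notin Z_{e+1}$.
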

\begin{proof}
Each set $U_{j,e} =\{ P\in \Spec R \,|\, \eta(P,j)<e \}$ is open, since
it is the complement of $\Supp(B_{j,e-1}/B_{j,e})$.
Also $\Spec R= \bigcup_{j\geq 0, e\geq 1} U_{j,e}$ hence the result follows
from the quasicompactness of $R$.
\end{proof}

\bibliographystyle{skalpha}
\bibliography{KatzmanBib}

\end{document}